\documentclass[12pt]{amsart}
\usepackage{a4wide}

\usepackage{amssymb}
\usepackage{amsthm}
\usepackage{mathtools}
\usepackage{tikz-cd}
\usepackage{color}
\usepackage{upref}
\usepackage{enumitem}
\usepackage{verbatim}
\usepackage{mathrsfs}
\usepackage{hyperref}

\allowdisplaybreaks

\newcommand{\men}{Menev\c se}

\newtheorem{thm}{Theorem}[section]
\newtheorem{prop}[thm]{Proposition}
\newtheorem{cor}[thm]{Corollary}
\newtheorem{lem}[thm]{Lemma}

\theoremstyle{definition}
\newtheorem{defn}[thm]{Definition}

\newtheorem{rem}[thm]{Remark}

\numberwithin{equation}{section}

\newcommand{\secref}[1]{Section~\textup{\ref{#1}}}
\newcommand{\thmref}[1]{Theorem~\textup{\ref{#1}}}
\newcommand{\corref}[1]{Corollary~\textup{\ref{#1}}}
\newcommand{\lemref}[1]{Lemma~\textup{\ref{#1}}}

\newcommand{\defnref}[1]{Definition~\textup{\ref{#1}}}
\newcommand{\remref}[1]{Remark~\textup{\ref{#1}}}

\newcommand{\apxref}[1]{Appendix~\textup{\ref{#1}}}

\newcommand{\bb}[1]{\mathbb{#1}}
\newcommand{\cc}[1]{\mathcal{#1}}

\newcommand{\N}{\bb N}

\newcommand{\Z}{\bb Z}
\newcommand{\T}{\bb T}

\newcommand{\II}{\ensuremath{\mathscr{I}}}
\newcommand{\CC}{\ensuremath{\mathscr{C}}}
\newcommand{\DD}{\ensuremath{\mathscr{D}}}
\newcommand{\OO}{\ensuremath{\mathscr{O}}}
\newcommand{\EE}{\ensuremath{\mathscr{E}}}
\newcommand{\KK}{\ensuremath{\mathscr{K}}}

\newcommand{\LL}{\ensuremath{\mathscr{L}}}
\newcommand{\RR}{\ensuremath{\mathscr{R}}}

\newcommand{\GG}{\cc G}

\newcommand{\OX}{\OO Y_1}

\renewcommand{\epsilon}{\varepsilon}
\newcommand{\<}{\langle}
\renewcommand{\>}{\rangle}
\newcommand{\pre}[1]{{}_{#1}}

\newcommand{\inv}{^{-1}}

\newcommand{\id}{\text{id}}
\newcommand{\case}{& \text{if }}

\renewcommand{\subset}{\subseteq}

\renewcommand{\bar}{\overline}

\newcommand{\variso}{\overset{\simeq}{\longrightarrow}}
\newcommand{\xt}{\otimes}
\newcommand{\ots}{\otimes_A}
\newcommand{\otso}{\otimes_{\OX}}

\renewcommand{\)}{\textup)}

\newcommand{\cst}{\ensuremath{C^*}}
\newcommand{\csta}{\ensuremath{C^*}-algebra}

\newcommand{\V}{\overline{V}}

\newcommand{\rep}{representation}
\newcommand{\hm}{homomorphism}

\newcommand{\iso}{iso\-mor\-phism}
\newcommand{\mor}{morphism}

\newcommand{\cp}{Cuntz--Pimsner}
\newcommand{\cpa}{Cuntz--Pimsner algebra}

\newcommand{\kga}{$k$-graph algebra}

\newcommand{\alg}{algebra}
\newcommand{\kg}{$k$-graph}

\newcommand{\corr}{correspondence}
\newcommand{\ps}{product system}

\newcommand{\hex}{hexagonal relations}
\newcommand{\moncat}{monoidal category}

\newcommand{\gensys}{generating system}

\newcommand{\aut}{\operatorname{Aut}}
\newcommand{\auto}{automorphism}

\newcommand{\midtext}[1]{\quad\text{#1}\quad}
\newcommand{\righttext}[1]{\quad\text{#1 }}

\newcommand{\algcor}{\ensuremath{\mathbf{C^*alg_{cor}}}}
\newcommand{\cscor}{\ensuremath{\mathbf{C^*cor_{pair}}}}

\begin{document}

\title[k-graph algebras from the bottom up]{k-graph algebras are iterated Cuntz--Pimsner algebras\\
--- from the bottom up}

\author[Deaconu]{Valentin Deaconu}
\address{Department of Mathematics and Statistics
\\University of Nevada
\\Reno, Nevada 89557}
\email{vdeaconu@unr.edu}

\author[Ery\"uzl\"u Paulovicks]{\men\ Ery\"uzl\"u Paulovicks}
\address{Department of Mathematical Sciences
\\New Mexico State University
\\Las Cruces, New Mexico 88003}
\email{Menevse.Paulovicks@gmail.com}

\author[Kaliszewski]{S. Kaliszewski}
\address{School of Mathematical and Statistical Sciences
\\Arizona State University
\\Tempe, Arizona 85287}
\email{kaliszewski@asu.edu}

\author[Quigg]{John Quigg}
\address{School of Mathematical and Statistical Sciences
\\Arizona State University
\\Tempe, Arizona 85287}
\email{quigg@asu.edu}

\date{\today}

\subjclass[2000]{
Primary 46L05; Secondary 46M15}
\keywords{
\kg,
product system,
Cuntz--Pimsner algebra}

\dedicatory{We dedicate this paper to Gene Abrams on the occasion of his 70th birthday.}

\date{\today}

\begin{abstract}
We introduce a new method of expressing a $k$-graph $C^*$-algebra as a Cuntz--Pimsner algebra. Kumjian, Pask, and Sims have done this directly, using a linking algebra approach and a $(k-1)$-graph algebra. This can be iterated downward. Our process, on the other hand, starts at the bottom, with Pimsner's theorem for graph algebras, and iterates upward.
We actually work with product systems over $\N^k$, and the result for $k$-graphs is a special case.
Our iteration step involves a ``decategorization'' of a recent theorem showing that the Cuntz--Pimsner construction is functorial at the level of ``enchilada categories''.
\end{abstract}

\maketitle

\section{Introduction}\label{sec:intro}

This paper combines
the following topics:
directed graph \csta s
and their higher-rank generalizations,
\cst-\corr s
and product systems of these,
and their \cp\
\alg s.
And a bit of category theory ---
at least in spirit if not in body.

It is tempting to imagine that one of Pimsner's primary motivations for inventing (what are now called) \cpa s (see \cite{pimsner}) was his realization that the \csta\ of a directed graph is naturally isomorphic to the \cpa\ of an associated \cst--\corr\ formed from the edges and vertices.
The \kg s introduced by Kumjian and Pask in \cite{kp} are rank-$k$ generalizations of directed graphs, and have \csta s formed by similar generators and relations. So it's tempting to ask whether these $k$-graph \alg s are also \cp\ \alg s. It's certainly too na\"ive to try for a \corr\ over functions on the vertices --- any such \corr\ would be isomorphic to one associated to a graph, as shown in \cite{kpq}.

But this is not a serious obstacle;
Raeburn and Sims showed in \cite{raesim} that there is a product system, over $\N^k$, of \corr s, and Fowler showed us in \cite{fow} how to construct an associated \csta.
This \ps\ is constructed from the edges in the $k$ dimensions.

It would still be good to have a \corr\ whose \cpa\ is isomorphic to the $k$-graph \alg.
Kumjian, Pask, and Sims (see \cite{morph}) gave one solution to this problem: use the edges in dimension $k$ to give a \corr\ over the \csta\ of the $(k-1)$-graph formed by the edges in the first $k-1$ dimensions.
They then used a linking algebra to prove that the associated \cpa\ is isomorphic to the $k$-graph \alg. This process could clearly be iterated downward $k$ times.
Something along the
same lines (but with fewer restrictions on the \kg s) is done in \cite{fletcher}.

Our purpose in this paper is to introduce a new method, which proceeds from the bottom up rather than the top down. We begin with Pimsner's theorem to get the \alg\ of the graph in dimension 1 as a \cpa. To continue, we apply a technique derived from a functor developed by the second author (see \cite{functor}), using an inductive process $k$ times with the aim of arriving at the \kga.
There is an extra step required each time: the construction derived from the functor produces a \corr\ that is not directly applicable to \kg s. To fix this, we must apply a tool introduced by the first author (see \cite{deaconu}) to reformulate the \corr\ in terms of \ps s. Actually, in its original form, this tool is applicable only for 2-graphs. To complete our iterative process we generalize this tool to \kga s.

There is a subtlety here: what our iterative process produces are \csta s of product systems over powers of $\N$.
To get the higher-rank graph \alg s
we apply the Raeburn--Sims theorem.
However, the initial output of the process gives us something more general.
In fact, our inductive process can be regarded as producing the \alg\ of a given \ps\ over $\N^k$ in $k$ steps.
The product systems associated to $k$-graphs are really only special cases.
In fact, this can be seen already when $k=2$:
the third and fourth authors, in collaboration with Patani (see \cite{kpq2}), have shown that
there is an obstruction preventing arbitrary product systems over $\N^2$ from being 2-graph product systems.
In this way, our method is somewhat more general than \kg s.

In the above remarks we glossed over the connection with category theory. The functor introduced by the second author could, in principle, be used to give a categorical procedure. However, due to the nature of the relevant categories, this would involve a lot of passing back and forth between \corr s and their \iso\ classes. So in the end we reluctantly decided to forego the functorial approach. Nevertheless, the iterative process itself is most clearly formulated categorically, and in \secref{abstract} we present the abstract procedure;
however, we stress that this categorical formulation is not used in the main body of our paper, and is included purely for illustrative purposes.

In \secref{prelim}
we state our conventions regarding \kg s, \corr s,
product systems, and the associated \csta s.
We emphasize that we impose fairly stringent conditions on our \kg s and \corr s: we require them to be nondegenerate, and also
\emph{regular} in the sense
that the \kg s must be row-finite and source-free,
and the \corr s must have left module homomorphism
that is injective into the compacts.
Presumably these assumptions could be relaxed,
but we believe that the clearest exposition of our methods requires the regularity assumption.

\secref{abstract} contains (as we mentioned above) a quite brief illustration of our method in the abstract realm of category theory, which we again emphasize is not used in the formulation of our main results.
In \secref{product} we give a precise description of our iterative process, using what we call 
\emph{\gensys s},
by which we mean that we work exclusively with the \corr s associated to the generators $e_i$ of $\N^k$.
The main technical tool we need involves the
``tensor groupoids'' of Fowler--Sims, although we use the more traditional terminology ``monoidal categories''.
Our main result is \corref{main}, showing that the \kga\ is a \cpa. As mentioned above, we deduce this as a special case of the general version for \ps s (\thmref{for product}).
The details of the proof of this proposition are somewhat messy, and to avoid interrupting the flow we relegate many of them to \secref{detail}.
We further relegate the proofs of a few technical lemmas to an appendix.

A recent preprint
\cite{huengsim}
uses two iterations of a version of this Cuntz--Pimsner procedure (they refer to it as the ``Deaconu--Fletcher constructions'') to prove K-theory results for 2-graphs.

A concluding remark:
given the wide interest in Leavitt path algebras,
which are purely algebraic versions of graph \csta s,
it seems natural to try to do something along the lines of the present paper
for Kumjian--Pask algebras \cite{claflyhue},
which are higher-rank versions of Leavitt path algebras.
We leave this to the experts in the Leavitt path algebra world.


\section{Preliminaries}\label{prelim}

Throughout, all $A-B$ \corr s
$\pre AX_B$
will be \emph{nondegenerate} in the sense that $AX=X$
and \emph{regular} in the sense that the left-module \hm\ 
\mbox{$\varphi=\varphi_X:A\to \LL(X)$}
maps injectively into $\KK(X)$. 

A \emph{representation} $(\pi,t)$ of $\pre AX_A$ on a $C^*$-algebra $B$ consists of a $*-$homomorphism $\pi: A\rightarrow B$ and a linear map $t: X\rightarrow B $ such that 
\[
\pi(a)t(x)=t(\varphi_X(a)(x)) \midtext{and} t(x)^* t(y)=\pi(\<x,y\>_A), 
\]
for $a\in A$ and $x, y\in X.$  For each representation $(\pi,t)$ of $\pre AX_A$ on $B$,  there exists a homomorphism $t^{(1)}: \KK(X)\rightarrow B$ such that 
\[
t^{(1)} (\theta_{x,y})=t(x){t(y)}^*
\]
for $x,y\in X. $ The representation $(\pi,t)$ is called \emph{injective} if $\pi$ is injective, in which case $t$ is an isometry and $t^{(1)}$ is injective. We denote the $C^*$-algebra generated by the images of $\pi$ and $t$ in $B$ by $C^*(\pi, t).$  Since we assume all \corr s are regular,
a \rep\ $(\pi,t)$ of $\pre AX_A$ in a \csta\ $B$ is covariant iff $ \pi=t^{(1)}\circ\varphi_X.$
The \emph{Cuntz-Pimsner algebra} $\OO X$ of $\pre AX_A$ is the $C^*$-algebra generated by the universal covariant representation, which we denote by $(\pi_X, t_X)$ throughout the paper\footnote{where we adopt Katsura's notational convention}. This representation is injective, and admits a gauge action.  

A \emph{$C^*$-correspondence automorphism} of $\pre AX_A$ is a pair  $(i_A, i_X )$ consisting of a bijective linear map $i_X: X\rightarrow X$, and an isomorphism $i_A: A\rightarrow A$ satisfying
\begin{enumerate}
\item $i_X(a\cdot x)=i_A(a)\cdot i_X(x)$,
\item $i_A( \<x,z\>_A ) = \<i_X(x), i_X(z)\>_{A}$,
\end{enumerate}
for all $a\in A$, and $x,z\in X.$


We recall the \corr\ category of \cite{functor}\footnote{although we impose stronger regularity conditions}.
The \emph{enchilada category} \algcor\ of \csta s has
\begin{itemize}
\item
Objects: \csta s $A$;

\item
Morphisms: $[X]:A\to B$ is the \iso\ class
of a (nondegenerate regular) $A-B$ \corr\ $X$.
\end{itemize}
The \emph{enchilada category} \cscor\ of \corr s has
\begin{itemize}
\item
Objects: regular \corr s $\pre AX_A$;

\item
Morphisms: $[M,U]:\pre AX_A\to \pre BY_B$
is the \iso\ class of a pair $(M,U)$,
where $M$ is a regular $A-B$ \corr\
making the diagram
\[
\begin{tikzcd}
A \arrow[r,"X"] \arrow[d,"M"']
&A \arrow[d,"M"]
\\
B \arrow[r,"Y"']
&B
\end{tikzcd}
\]
commute,
$U:X\ots M\variso M\xt_BY$ is an isomorphism,
and $(M,U)$ is \emph{isomorphic} to another such pair $(M',U')$ if
there is an \iso\ $\xi:M\variso M'$ making the diagram
\[
\begin{tikzcd}
X\ots M \arrow[r,"U"] \arrow[d,"1\xt\xi"']
&M\xt_BY \arrow[d,"\xi\xt 1"]
\\
X\ots M' \arrow[r,"U'"']
&M'\xt_BY
\end{tikzcd}
\]
commute.
\end{itemize}
The composition of \mor s
$[M,U]:\pre AX_A\to \pre BY_B$
and
$[N,V]:\pre BY_B\to \pre CZ_C$
is the \iso\ class
\[
[N,V]\circ [M,U]:=[M\xt_BN,(1\xt V)(U\xt 1)].
\]

For our gauge action arguments the essential proposition we are using is derived from \cite[Proposition~2.9]{robszy}:
\begin{prop}\label{comp}
For a regular \cst-correspondence $\pre AX_A$, let  $(i_A, i_X): \pre AX_A\rightarrow \pre AX_A$ be a \cst-correspondence \auto. Then there exists an automorphism $i_{X*}: \OO X\rightarrow \OO X$ such that 
\[
i_{X*}\circ t_X=t_X\circ i_X \text{ and } i_{X*}\circ\pi_X=\pi_X\circ i_A.
\]
\end{prop}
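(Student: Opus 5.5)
The natural route is through the universal property of $\OO X$. Set $\pi:=\pi_X\circ i_A$ and $t:=t_X\circ i_X$. I will check that $(\pi,t)$ is a covariant representation of $\pre AX_A$ on $\OO X$, so that universality gives a homomorphism $i_{X*}:\OO X\to\OO X$ with $i_{X*}\circ t_X=t$ and $i_{X*}\circ\pi_X=\pi$. Applying the same construction to the inverse automorphism $(i_A\inv,i_X\inv)$ will then show that $i_{X*}$ is bijective.

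First I check that $(\pi,t)$ is a representation. For $a\in A$ and $x,y\in X$, property (1) of a correspondence automorphism gives
\[
\pi(a)t(x)=\pi_X(i_A(a))\,t_X(i_X(x))=t_X(i_A(a)\cdot i_X(x))=t_X(i_X(a\cdot x))=t(a\cdot x).
\]
Property (2) gives
\[
t(x)^*t(y)=\pi_X(\<i_X(x),i_X(y)\>_A)=\pi_X(i_A(\<x,y\>_A))=\pi(\<x,y\>_A).
\]
Linearity of $t$ is clear.

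Next comes covariance, which I expect to be the only step requiring care. Since the correspondence is regular, I need $\pi=t^{(1)}\circ\varphi_X$. On rank-one operators,
\[
t^{(1)}(\theta_{x,y})=t_X(i_X x)\,t_X(i_X y)^*=t_X^{(1)}(\theta_{i_Xx,i_Xy}).
\]
So I introduce the map $\mathrm{Ad}\,i_X:\KK(X)\to\KK(X)$, $T\mapsto i_X T i_X\inv$. It sends $\theta_{x,y}$ to $\theta_{i_Xx,i_Xy}$, because
\[
\theta_{i_Xx,i_Xy}(i_Xz)=i_Xx\cdot\<i_Xy,i_Xz\>=i_Xx\cdot i_A(\<y,z\>)=i_X(x\cdot\<y,z\>).
\]
Here I also need the right-module compatibility $i_X(x\cdot b)=i_X(x)\cdot i_A(b)$. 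This follows from (2): expand $\|i_X(xb)-i_X(x)i_A(b)\|^2$ using inner products and use that $i_A$ is a $*$-homomorphism. The same inner-product argument shows $i_X$ is isometric, so $\mathrm{Ad}\,i_X$ is a well-defined $*$-automorphism of $\KK(X)$. Hence $t^{(1)}=t_X^{(1)}\circ \mathrm{Ad}\,i_X$ on the dense span of rank-one operators, and by continuity on all of $\KK(X)$.

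To finish covariance, (1) gives $i_X\varphi_X(a)i_X\inv=\varphi_X(i_A(a))$. Combining this with covariance of $(\pi_X,t_X)$,
\[
t^{(1)}(\varphi_X(a))=t_X^{(1)}(\varphi_X(i_A a))=\pi_X(i_A a)=\pi(a).
\]
Universality now produces $i_{X*}$ with the two required identities.

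Finally, the pair $(i_A\inv,i_X\inv)$ is again a correspondence automorphism, so it yields a homomorphism $j$ with $j\circ t_X=t_X\circ i_X\inv$ and $j\circ\pi_X=\pi_X\circ i_A\inv$. Then $j\circ i_{X*}$ and $i_{X*}\circ j$ fix the generators $\pi_X(A)\cup t_X(X)$ of $\OO X$. Hence both compositions are the identity, and $i_{X*}$ is an automorphism.
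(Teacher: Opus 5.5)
Your proof is correct. The paper gives no argument of its own: it simply derives the statement from Robertson--Szyma\'nski's Proposition~2.9, the general result that morphisms of correspondences compatible with the left actions induce homomorphisms of Cuntz--Pimsner algebras. Your direct verification is that same universal-property argument specialized to an automorphism: you check that $(\pi_X\circ i_A,\,t_X\circ i_X)$ is covariant via $t^{(1)}=t_X^{(1)}\circ\mathrm{Ad}\,i_X$ and $\mathrm{Ad}\,i_X\circ\varphi_X=\varphi_X\circ i_A$.

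You also make explicit two points the citation leaves implicit. First, you derive the right-module compatibility $i_X(x\cdot b)=i_X(x)\cdot i_A(b)$ from the inner-product condition. Second, you use the inverse pair $(i_A^{-1},i_X^{-1})$ to show that the induced homomorphism is actually an automorphism.
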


We will also need the following:
let $\pre AY_A$ be a \cst-correspondence, and let $\gamma$ denote the gauge action on $\OO Y$.  For any $n\in \Z$ consider the subspace
\[ (\OO Y)^n:=\{T\in \OO Y: \gamma_z(T)=z^n(T), \text{ for all } z\in\T\}.\] Then observe that we have 
\begin{align*}
\OO Y&=\overline{\text{span}}\{t_{Y}^n(y_n)t_{Y}^m(y_m)^*: y_n\in Y^{\otimes n}, y_m\in Y^{\otimes m}, n,m\geq 0\}\\
&=\overline{\text{span}}\{T_s\in(\OO Y)^s: s\in \Z\}.
\end{align*}

We adapt to our needs a bit of the theory of multiplier bimodules from \cite[Sections~1.2--1.3]{enchilada}.
Given an $A$-\corr\ $X$ the \emph{multiplier bimodule} $M(X)$ is $\LL(A,X)$ with natural $M(A)$~\corr\ operations and strict topology,
and is the strict completion of $X$.
If $X$ and $Y$ are $A$-\corr s, any nondegenerate\footnote{where \emph{nondegenerate} here means $\Phi(X)A=Y$}
 \corr\ \hm\ $\Phi:X\to M(Y)$ extends uniquely to a strictly continuous \corr\ \hm\ $\bar\phi:M(X)\to M(Y)$ (\cite[Theorem~1.30]{enchilada}).
Given $A$-\corr s $X$ and $Y$,
by \cite[Lemma~1.32]{enchilada}
there is a natural embedding
\[
M(X)\xt_{M(A)}M(Y)\hookrightarrow M(X\ots Y).
\]
If we also have $A$-\corr s $Z,W$ and \corr\ \hm s $\phi:X\to M(Z),\sigma:Y\to M(W)$,
by \cite[Proposition~1.34]{enchilada}
there is a natural \corr\ \hm\
\[
\phi\xt\sigma:X\ots Y\to M(Z\ots  W),
\]
which is nondegenerate if $\phi$ and $\sigma$ are.

We will work frequently with \ps s over the commutative monoid $\N^k$,
and we list the axioms in the relevant form:
a \emph{product system} over $\N^k$ of $A$-\corr s is a 
semigroup $X=\bigsqcup_{s\in \N^k}X_s$,
where each $X_s$ is a
nondegenerate
regular
$A$-\corr, such that
\begin{itemize}
    \item $X_0=\pre AA_A$ with the canonical $A$-\corr\ structure, and

    \item the multiplication in $X$ implements \iso s
$X_s\ots X_t\simeq X_{s+t}$ for all 
$s,t\in\N^k$.
\end{itemize}

Let 
$B$ be a \csta.
A \emph{\rep} $\psi:X\to B$ of a \ps\ $X$
comprises \rep s
$\psi_s:X_s\to B$ such that
\[
\psi_s(x)\psi_t(y)=\psi_{s+t}(xy)\righttext{for all}x\in X_s,y\in X_t,
\]
and
$\psi$ is \emph{\cp\ covariant} if each $\psi_s$ is.
The \emph{Cuntz--Pimsner algebra} $\OO X$ is the  $C^*$-algebra generated by the universal covariant representation $\psi$ of $X$. It has a natural gauge action $\gamma$ of $\T^k$ such that
$\gamma_{z}(\psi_s(x))=z^s \psi_s(x),$ for $z\in \N^k$ and $x\in X_s.$ 
\cite[Section~3.3]{dkps}.
The Gauge-Invariant Uniqueness theorem for \ps s is:

\begin{lem}[{\cite[Lemma~3.3.2]{dkps}}]\label{gauge-lem}
Let $X=\bigsqcup_{s\in\N^k}X_s$ be a \ps\ of regular $C^*$-correspondences. Let $B$ be a $C^*$-algebra equipped with a
strongly continuous action $\beta : \T^k\to \aut B$, and let $\sigma=\{\sigma_s\}_{s\in \N^k}$ be a covariant representation of $X$ in B. 
Suppose that the following conditions are satisfied.
\begin{enumerate}[label=\textup{(\arabic*)}]
\item For all $z\in\T^k, s\in\N^k, x\in X_s$ we have
$\beta_z(\sigma_s(x))=z^s\sigma_s(x)$.

\item The homomorphism $\sigma_0: A\to B$ is injective.
\end{enumerate}
Then the induced \hm\ $\sigma_{*}: \OO X\to B $ is injective. 
\end{lem}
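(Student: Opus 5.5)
The plan is to reduce to the standard gauge-invariant uniqueness theorem for a single correspondence, by passing to the fixed-point algebra and exhausting it with an increasing union of "cores". Write $\sigma_*:\OO X\to B$ for the induced \hm; it is surjective onto $C^*(\sigma)$, and by (1) it is equivariant for $\gamma$ on $\OO X$ and $\beta$ on $B$. By the standard averaging argument, to prove $\sigma_*$ is injective it suffices to show that $\sigma_*$ is injective on the fixed-point algebra $(\OO X)^\gamma$.

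To see this, let $E_\gamma(T)=\int_{\T^k}\gamma_z(T)\,dz$ and $E_\beta$ the analogous expectation on $B$. Both are faithful conditional expectations, and equivariance gives $\sigma_*\circ E_\gamma=E_\beta\circ\sigma_*$. If $\sigma_*(T)=0$, then $\sigma_*(E_\gamma(T^*T))=E_\beta(\sigma_*(T^*T))=0$. Injectivity on the fixed-point algebra forces $E_\gamma(T^*T)=0$, and faithfulness then gives $T=0$.

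Next I describe the fixed-point algebra. Each $\psi_s(x)\psi_t(y)^*$ lies in the $z^{s-t}$ spectral subspace. Using covariance and regularity, products $\psi_t(y)^*\psi_s(x)$ can be rewritten as limits of spans of elements $\psi_{p}(u)\psi_q(v)^*$ (a Wick-ordering argument in the Nica-covariant spirit of \cite{dkps}; for regular product systems over $\N^k$ this is automatic from covariance). Hence $(\OO X)^\gamma=\overline{\bigcup_{n}\psi^{(n)}(\KK(X_{n\mathbf 1}))}$, where $\mathbf 1=(1,\dots,1)$ and $\psi^{(n)}(\theta_{x,y})=\psi_{n\mathbf 1}(x)\psi_{n\mathbf 1}(y)^*$. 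Here one uses covariance to push each $\KK(X_s)$ up into $\KK(X_{n\mathbf 1})$ whenever $s\le n\mathbf 1$: regularity gives $\KK(X_s)\to\KK(X_s\ots X_{n\mathbf 1-s})\cong\KK(X_{n\mathbf 1})$, compatibly with $\psi$. So the fixed-point algebra is an inductive limit of the algebras $\KK(X_{n\mathbf 1})$.

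It then suffices to show that $\sigma^{(n)}:\KK(X_{n\mathbf 1})\to B$ is injective for each $n$, since an injective \hm\ on each term of an increasing union is isometric there and hence injective on the closure. Now $\sigma_{n\mathbf 1}$ is a representation of the correspondence $X_{n\mathbf 1}$ with injective coefficient map $\sigma_0$, so $\sigma_{n\mathbf 1}$ is isometric and $\sigma^{(n)}=\sigma_{n\mathbf 1}^{(1)}$ is injective, as recalled in \secref{prelim}.

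The main obstacle is the middle step: showing that the spectral subspace of degree $0$ really is the closed union of the cores $\psi^{(n)}(\KK(X_{n\mathbf 1}))$, that is, controlling mixed products $\psi_t(y)^*\psi_s(x)$ with $s,t$ incomparable. I would first try to handle this by writing $\psi_t(y)^*\psi_s(x)$, via covariance $\psi_0=\psi_t^{(1)}\circ\varphi$ applied at degree $s\vee t$, as $\psi_t(y)^*\psi^{(1)}_{s\vee t-s}(\cdot)\psi_s(x)$. Expanding then yields terms $\psi_{s\vee t-t}(u)\psi_{s\vee t-s}(v)^*$. This is where regularity ($\varphi$ injective into the compacts) is essential.
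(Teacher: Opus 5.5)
The paper gives no proof of this lemma; it quotes it from \cite[Lemma~3.3.2]{dkps}, so there is no internal argument to compare yours against. Your proof is the standard gauge-invariant uniqueness argument, and it is correct.
\begin{itemize}
\item The faithful expectation $E_\gamma$, together with $\sigma_*\circ E_\gamma=E_\beta\circ\sigma_*$, reduces everything to $(\OO X)^\gamma$.
\item For $s\le t$ you have $\psi^{(s)}(K)=\psi^{(t)}(K\otimes 1_{X_{t-s}})$. This is the same computation as in \lemref{compact} and \lemref{subcat}, and it needs only covariance at degree $t-s$ plus $\varphi(A)\subseteq\KK$. It makes $(\OO X)^\gamma$ the closure of the increasing union of the $C^*$-subalgebras $\psi^{(n\mathbf 1)}(\KK(X_{n\mathbf 1}))$.
\item $\sigma_*\circ\psi^{(n\mathbf 1)}=\sigma^{(1)}_{n\mathbf 1}$ is injective because $\sigma_0$ is. Hence $\sigma_*$ is isometric on the union, and so on its closure.
\end{itemize}
Compared with the bare citation, this makes the lemma self-contained from the paper's own preliminaries. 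It also shows that under the blanket regularity hypothesis the core is simply an increasing union of images of compacts, so no Cuntz--Nica--Pimsner machinery is needed.

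One repair is needed in the step you flagged. As written, you insert $\psi^{(1)}_{s\vee t-s}$ between $\psi_t(y)^*$ and $\psi_s(x)$, and that manipulation is circular. Since $s\vee t-s\le t$ coordinatewise, $\psi_t(y)^*\psi_{s\vee t-s}(u)$ lies in $\psi_{s\wedge t}(X_{s\wedge t})^*$. Then $\psi_{s\wedge t}(w)^*\psi_{s\vee t-s}(v)^*=\psi_t(vw)^*$, so you are back to a product of type $\psi_t(\cdot)^*\psi_s(\cdot)$.

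Instead, use covariance at degree $s\vee t$ itself, as your own words suggest. Factor $x=ax'$ by nondegeneracy. Write $\varphi_{s\vee t}(a)$ as a norm limit of finite sums $\sum_i\theta_{u_i,v_i}$ with $u_i,v_i\in X_{s\vee t}$. Then
\[
\psi_t(y)^*\psi_s(x)=\psi_t(y)^*\,\psi^{(1)}_{s\vee t}(\varphi_{s\vee t}(a))\,\psi_s(x')=\lim\sum_i\bigl(\psi_t(y)^*\psi_{s\vee t}(u_i)\bigr)\bigl(\psi_{s\vee t}(v_i)^*\psi_s(x')\bigr).
\]
Because $s,t\le s\vee t$, the factorizations $X_{s\vee t}\cong X_t\otimes_A X_{s\vee t-t}\cong X_s\otimes_A X_{s\vee t-s}$ give
\[
\psi_t(y)^*\psi_{s\vee t}(u_i)\in\psi_{s\vee t-t}(X_{s\vee t-t})
\quad\text{and}\quad
\psi_{s\vee t}(v_i)^*\psi_s(x')\in\psi_{s\vee t-s}(X_{s\vee t-s})^*.
\]
This is exactly the Wick ordering you need for $\OO X=\overline{\mathrm{span}}\{\psi_s(x)\psi_t(y)^*\}$. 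Alternatively, you can cite the automatic Nica covariance recorded in \secref{prelim}.
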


By
our blanket regularity assumption,
$X$ is automatically compactly aligned (by \cite[Proposition~5.8]{fow}).
Moreover, every \cp\ covariant \rep\ is automatically Nica covariant (see \cite[Proposition~5.4]{fow} or \cite[Lemma~6.1]{raesim}),
and even Cuntz--Nica--Pimsner covariant \cite[Corollary~5.2]{simyee}.

It will be convenient for our purposes to have available the more abstract notion of \ps\ from \cite{fowsim}\footnote{except that we adopt the conventions from, e.g., \cite[Section~VII.1]{maclane}}:
a \emph{\moncat}
is a category $\GG$ equipped with a bifunctor $\xt:\GG\times\GG\to \GG$ and an ``identity'' object $1_\GG$ 
such that, up to natural isomorphism, the product $\xt$ is associative and $1_\GG$ acts like an identity for $\xt$.

For our purposes the most important example of a \moncat\ consists of $A$-\corr s\footnote{for some fixed \csta\ $A$, and as always we tacitly assume the \corr s are nondegenerate and regular}, with multiplication given by balanced tensor product, and identity object $\pre AA_A$,
and in which in which a morphism $\theta:X\to Y$ is an \iso\ of $A$-\corr s.

\begin{defn}
A \emph{product system} over $\N^k$ in a \moncat\ $\GG$ is a
family $\{X_s\}_{s\in \N^k}$ of objects in $\GG$
such that
\begin{itemize}
\item
$X_0=1_\GG$

\item
$X_s\xt X_t\simeq X_{s+t}$ for all $s,t\in \N^k$,
\end{itemize}
where all \iso s are natural and associative\footnote{up to natural isomorphism}
%
\end{defn}

Of particular interest to us are \ps s of $A$-\corr s,
with
$X_0=\pre AA_A$.

We want to construct \rep s of $X$ from certain tuples of \rep s of some of the $A$-\corr s $Y_1,\dots,Y_k$,
and we start with a concept that makes sense in any \moncat:

\begin{defn}\label{gensys}
    A \emph{\gensys} 
    in a \moncat\
    is a pair $(Y,\theta)$, where
    $Y=\{Y_i:i=1,\dots,k\}$ is a collection of objects in $\GG$
    and
    $\theta=\{\theta_{ij}:i,j=1,\dots,k\}$ is a collection of isomorphisms
    \[
    \theta_{ij}:Y_i\xt Y_j\variso Y_j\xt Y_i\righttext{for all}i,j,
    \]   
    such that
    \begin{itemize}
    \item
    $\theta_{ii}=\id$;
        
    \item
    $\theta$ is \emph{symmetric} in the sense that $\theta_{ij}=\theta_{ji}\inv$;
    
    \item
    the \emph{\hex} hold: for all $i,j,\ell$
    the diagram
    \[
    \begin{tikzcd}
    &Y_j\xt Y_i\xt Y_\ell \arrow[dr,"1\xt \theta_{i\ell}"]
    \\
    Y_i\xt Y_j\xt Y_\ell \arrow[ur,"\theta_{ij}\xt 1"] \arrow[d,"1\xt \theta_{j\ell}"']
    &&Y_j\xt Y_\ell\xt Y_i \arrow[d,"\theta_{j\ell}\xt 1"]
    \\
    Y_i\xt Y_\ell\xt Y_j \arrow[dr,"\theta_{i\ell}\xt 1"']
    && Y_\ell\xt Y_j\xt Y_i
    \\
    &Y_\ell\xt Y_i\xt Y_j \arrow[ur,"1\xt \theta_{ij}"']
    \end{tikzcd}
    \]
    commutes.
    \end{itemize}
\end{defn}

It will be convenient for us to extract from \cite[Theorems~2.1, 2.2]{fowsim} a single result:

\begin{thm}[{\cite{fowsim}}]\label{fs}
If $(Y,\theta)$ is a \gensys\ in a \moncat\ $\GG$,
then there is a \ps\ $X$ over $\N^k$ in $\GG$ such that
\[
X_{e_i}=Y_i\righttext{for}i=1,\dots,n.
\]
Moreover, $X$ is essentially unique \(i.e., unique up to isomorphism\), and every \ps\ over $\N^k$ arises in this way\footnote{up to isomorphism, of course}.
\end{thm}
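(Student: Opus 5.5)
Since this theorem is extracted from Fowler--Sims, the plan is to reproduce their argument in the language of monoidal categories; I commit to the construction below.

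\emph{Existence.} For $s=(s_1,\dots,s_k)\in\N^k$ put
\[
X_s:=Y_1^{\xt s_1}\xt Y_2^{\xt s_2}\xt\cdots\xt Y_k^{\xt s_k},
\]
the ordered word, with $X_0=1_\GG$ and $X_{e_i}=Y_i$. The multiplication $X_s\xt X_t\to X_{s+t}$ is built from the $\theta_{ij}$. The concatenated word $Y_1^{s_1}\cdots Y_k^{s_k}Y_1^{t_1}\cdots Y_k^{t_k}$ is sorted back into ordered form by adjacent transpositions, and each transposition swaps $Y_i\xt Y_j$ using $\theta_{ij}$ tensored with identities. Associativity constraints are suppressed by Mac~Lane coherence, which lets us treat $\GG$ as strict.

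\emph{Key step, and the main obstacle.} This isomorphism must not depend on the chosen sequence of transpositions. Let $w$ be any word in $Y_1,\dots,Y_k$. Each sorting corresponds to a reduced expression for a permutation in the symmetric group, acting on positions. By Matsumoto's theorem, any two reduced expressions are connected by braid moves $s_as_{a+1}s_a=s_{a+1}s_as_{a+1}$ and commutation moves $s_as_b=s_bs_a$ with $|a-b|>1$.
\begin{itemize}
\item Commutation moves are harmless by functoriality of $\xt$ (the interchange law).
\item Braid moves are exactly the hexagonal relations in Definition~\ref{gensys}.
\item When two adjacent letters are equal, the swap is $\theta_{ii}=\id$.
\end{itemize}
Non-reduced expressions are handled as follows: cancelling $s_as_a$ is allowed because $\theta_{ji}\theta_{ij}=\id$ by symmetry. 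Alternatively, we define the map using only a canonical reduced word, namely the minimal-length permutation sorting the word stably. This makes the map $\Theta_{s,t}:X_s\xt X_t\to X_{s+t}$ well defined, and in fact it shows that every composite of swaps between two arrangements of the same multiset of letters is the same isomorphism. The subtle point here is handling repeated letters correctly; this is where I expect to spend the care.

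\emph{Product system axioms.} Associativity of $\Theta$ now follows immediately. Both $\Theta_{r+s,t}\circ(\Theta_{r,s}\xt1)$ and $\Theta_{r,s+t}\circ(1\xt\Theta_{s,t})$ are composites of swaps from $X_r\xt X_s\xt X_t$ to $X_{r+s+t}$, so they coincide by the key step. The unit and naturality conditions are evident.

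\emph{Uniqueness and surjectivity.} Let $Z$ be any product system with $Z_{e_i}\simeq Y_i$, and define $\theta_{ij}$ as the composite
\[
Z_{e_i}\xt Z_{e_j}\to Z_{e_i+e_j}\to Z_{e_j}\xt Z_{e_i},
\]
the second map being the inverse of the product system isomorphism. These $\theta_{ij}$ form a generating system: symmetry is obvious, $\theta_{ii}=\id$ holds, and the hexagon follows from associativity of $Z$, since all six paths factor through $Z_{e_i+e_j+e_\ell}$. This shows that every product system arises from a generating system. Iterating the product isomorphisms gives $Z_s\simeq X_s$, and these identifications are compatible with the multiplications because both sides are determined by the $\theta_{ij}$ via the key step. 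This yields essential uniqueness.
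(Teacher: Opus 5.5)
The paper does not prove this statement at all. It imports it as a black box from Fowler--Sims, where it is established for product systems over arbitrary right-angled Artin semigroups.

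Your proposal is a self-contained proof of the special case $\N^k$, and it is sound in outline. All generators commute, and the convention $\theta_{ii}=\id$ even covers a generator with itself. So coherence becomes a question about $S_n$ acting on words of length $n$. Symmetry, the interchange law and the hexagonal relations are exactly the Coxeter relations $s_a^2=1$, $s_as_b=s_bs_a$ for $|a-b|>1$, and $s_as_{a+1}s_a=s_{a+1}s_as_{a+1}$. This is more transparent than a citation, but it is specific to the free commutative case: in a general right-angled Artin semigroup not every permutation of a word is admissible.

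Your converse and uniqueness paragraph works once you state explicitly that $Z$ is assumed to induce the given $\theta_{ij}$. Compatibility then reduces to $\mu_{w s_a}\circ(1\otimes\theta_{w_aw_{a+1}}\otimes 1)=\mu_w$ for the iterated multiplications $\mu_w$ of $Z$. This follows from associativity together with $m_{e_j,e_i}\circ\theta_{ij}=m_{e_i,e_j}$.

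Two places in your key step need tightening.

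(i) Matsumoto's theorem plus deleting adjacent pairs $s_as_a$ does not by itself bring an arbitrary expression to a reduced one. For example, $s_1s_2s_1s_2$ in $S_3$ has no adjacent repeated letter but is not reduced; you would need Tits' solution of the word problem. It is simpler to drop Matsumoto and use the presentation of $S_n$ directly. The swap isomorphisms satisfy the Coxeter relations, so they define a functor on the action groupoid of $S_n$ acting on words. Hence any composite of swaps depends only on the initial word and the underlying permutation.

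(ii) Your stronger assertion, that any two composites between the same two arrangements agree, is not yet justified when letters repeat. Distinct permutations can carry $w$ to the same $w'$, so you must show that the stabilizer of each word acts by the identity.
For the sorted word $1^{s_1}\cdots k^{s_k}$, the stabilizer is the Young subgroup generated by the $s_a$ with $w_a=w_{a+1}$, and each of these acts by $\theta_{ii}=\id$. The stabilizer of any other word is conjugate to this one by a permutation carrying that word to sorted form, so functoriality gives triviality there as well.

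Alternatively, you can avoid the strong claim altogether. If you sort stably, both sides of the associativity identity on $X_r\otimes X_s\otimes X_t$ realize the same permutation, namely the unique sort that keeps equal letters in their original order. Point (i) then already gives equality.
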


Now we return to \rep s in a \csta\ $B$ of a \ps\ $X$ of $A$-\corr s.

\begin{defn}
Let $\GG$ be the \moncat\
of $A$-\corr s, and let $\pi$ be a \hm\ of $A$ into a \csta\
$B$.
We define a new \moncat\ $\GG^B$
whose objects are pairs $(\sigma,X)$,
where 
$X$ is an $A$-\corr\ 
and $\sigma$ is a \rep\ of $X$ in $B$
with coefficient \hm\ $\pi$ --- i.e., $\sigma(axc)=\pi(a)\sigma(x)\pi(c)$
for $a,c\in A,x\in X$.
A morphism 
$\zeta:(\sigma,X)\to (\omega,Y)$ in $\GG^B$ 
is a \corr\ \iso\  $\zeta$
making the diagram

\[
\begin{tikzcd}
    X \arrow[r,"\zeta","\simeq"'] \arrow[d,"\sigma"']
    &Y \arrow[dl,"\omega"]
    \\
    B
\end{tikzcd}
\] commute.
We define a multiplication in $\GG^B$ by
\[
(\sigma,X)\xt (\omega,Y)=(\sigma\omega,X\ots Y),
\]
where $\sigma\omega$ is the \rep\ of $X\ots Y$ in $B$ given on elementary tensors by
\[
\sigma\omega(x\ots y)=\sigma(x)\omega(y)
\righttext{for}x\in X,y\in Y,
\]
and we define an identity object by
\[
1_{\GG^B}=(\pi,\pre AA_A).
\]
It's a routine exercise to verify the axioms of a monoidal category.
\end{defn}

We will denote a \gensys\ in the \moncat\ $\GG^B$ by 
$(\sigma,Y,\theta)$,
where now we have
    \[
    \theta_{ij}:(\sigma_i\sigma_j,Y_i\ots Y_j)\variso (\sigma_j\sigma_i,Y_j\ots Y_i)\righttext{for all}i,j.
    \]
Note that $(Y,\theta)$ is a \gensys\ in $\GG$, 
and the diagram
\[
\begin{tikzcd}
    Y_i\ots Y_j \arrow[r,"\theta_{ij}","\simeq"'] \arrow[d,"\sigma_i\sigma_j"']
    &Y_j\ots Y_i \arrow[dl,"\sigma_j\sigma_i"]
    \\
    B
\end{tikzcd}
\] commutes for all $i,j$.

Similarly, we denote a \ps\ over $\N^k$ in $\GG^B$ by $(\psi,X)$, where we are given \iso s
\[
(\psi_s,X_s)\xt (\psi_t,X_t)\variso
(\psi_{s+t},X_{s+t})
\righttext{for all}s,t\in \N^k.
\]

We will always want \rep s of \corr s to be \cp\ covariant.
For this it will be convenient to have a monoidal subcategory of $\GG^B$:

\begin{defn}
With the above notation, we define a full subcategory $\GG^B_c$ of $\GG^B$
by taking only the objects $(\sigma,X)$ in which the representation $\sigma$ is \cp\ covariant.
\end{defn}

We want to prove that $\GG^B_c$ is a monoidal subcategory. For this we will need the following Lemma:

\begin{lem}\label{compact}
Take two regular correspondences $X$ and $Y$ over $A$. For $a\in A$, let $\varphi_X(a)=\lim_{s\rightarrow\infty}\sum_{r=1}^{N_s}\theta_{x_r^sa_r^s,\bar{x}_r^s}$, where $x_r^s,\bar{x}_r^s \in X,$ and $a_r^s\in A.$ Moreover, for each $s,r$ let \[\varphi_Y(a_r^s)=\lim_{t\rightarrow\infty}\sum_{j=1}^{N_{srt}}\theta_{y_j^{srt},k_j^{srt}}\] for $y_j^{srt}, k_j^{srt}\in Y.$ Then we have the equality 
\[\varphi_X(a)\otimes 1_Y=\lim_{s\rightarrow\infty}\sum_{r=1}^{N_s}\left[\lim_{t\rightarrow\infty}\sum_{j=1}^{N_{srt}}\theta_{x_r^s\ots y_j^{srt},\bar{x}_r^s\ots k_j^{srt}}\right]\]
\end{lem}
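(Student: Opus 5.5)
The plan is to prove the equality in two stages. First, pass the outer limit through the map $T\mapsto T\otimes 1_Y$, which sends $\LL(X)$ to $\LL(X\ots Y)$. Then compute each term $\theta_{x a,\bar x}\otimes 1_Y$ explicitly using the inner expansion of $\varphi_Y(a)$.

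First, I would recall that $T\mapsto T\otimes 1_Y$ is a $*$-homomorphism $\LL(X)\to\LL(X\ots Y)$. It is therefore contractive, hence norm continuous, and it is linear. Applying it to
\[
\varphi_X(a)=\lim_s\sum_{r=1}^{N_s}\theta_{x_r^sa_r^s,\bar x_r^s},
\]
with the limit in norm, gives
\[
\varphi_X(a)\otimes 1_Y=\lim_s\sum_{r=1}^{N_s}\theta_{x_r^sa_r^s,\bar x_r^s}\otimes 1_Y .
\]
It thus suffices to show, for fixed $x,\bar x\in X$ and $b\in A$ with $\varphi_Y(b)=\lim_t\sum_j\theta_{y_j^t,k_j^t}$, that
\[
\theta_{xb,\bar x}\otimes 1_Y=\lim_t\sum_j\theta_{x\ots y_j^t,\ \bar x\ots k_j^t}.
\]

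For the second stage, I would evaluate both sides on an elementary tensor $z\ots y$. On the left,
\[
(\theta_{xb,\bar x}\otimes1)(z\ots y)=xb\<\bar x,z\>\ots y=x\ots \varphi_Y(b)\bigl(\<\bar x,z\>y\bigr).
\]
On the right, a direct computation gives
\[
\theta_{x\ots y_j,\bar x\ots k_j}(z\ots y)=x\ots y_j\<\bar x\ots k_j,z\ots y\>=x\ots y_j\<k_j,\<\bar x,z\>y\>=x\ots\theta_{y_j,k_j}\bigl(\<\bar x,z\>y\bigr).
\]
So the finite sum equals $\tau_x\circ S_t\circ\tau_{\bar x}^*$, where $S_t=\sum_j\theta_{y_j^t,k_j^t}$. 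Here $\tau_x\colon Y\to X\ots Y$ is the adjointable creation operator $y\mapsto x\ots y$, whose adjoint sends $z\ots y$ to $\<x,z\>y$. Likewise the left side equals $\tau_x\varphi_Y(b)\tau_{\bar x}^*$.

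Finally, since $\|\tau_x\|\le\|x\|$, I would estimate
\[
\|\tau_xS_t\tau_{\bar x}^*-\tau_x\varphi_Y(b)\tau_{\bar x}^*\|\le\|x\|\,\|\bar x\|\,\|S_t-\varphi_Y(b)\|\to0 .
\]
This yields the inner limit in norm, and combined with the first stage it gives the stated equality.

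The main obstacle is justifying the operator factorization. This requires checking that $\tau_x$ is adjointable with the adjoint given above, and that operators agreeing on elementary tensors agree on all of $X\ots Y$. Both follow from linearity, boundedness, and density of the span of elementary tensors. Everything else is a direct computation.
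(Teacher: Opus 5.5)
Your proof is correct and uses the same computation as the paper. Both evaluate on elementary tensors $z\ots y$, move $a_r^s$ across the tensor sign to obtain $\varphi_Y(a_r^s)$, expand that, and recognize the terms as $\theta_{x_r^s\ots y_j^{srt},\bar x_r^s\ots k_j^{srt}}$. Your extra steps — continuity of $T\mapsto T\otimes 1_Y$ and the factorization $\tau_x S_t\tau_{\bar x}^*$ with $\|\tau_x\|\le\|x\|$ — justify the interchange of limits that the paper performs pointwise without comment, and they show the limits hold in norm.
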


\begin{proof}
\begin{align*}
\varphi_X(a)\otimes 1_Y (z\ots y)&=\lim_{s\rightarrow\infty}\sum_{r=1}^{N_s}x_r^sa_r^s\<\bar{x}_r^s,z\>_A\ots y\\
&=\lim_{s\to\infty}\sum_{r=1}^{N_s}x_r^s\ots\varphi_Y(a_r^s)\<\bar{x}_r^s,z\>_A\cdot y \\
&=\lim_{s\rightarrow\infty}\sum_{r=1}^{N_s}x_r^s\ots\left[\lim_{t\rightarrow\infty}\sum_{j=1}^{N_{srt}}\theta_{y_j^{srt},k_j^{srt}}\<\bar{x}_r^s,z\>_A\cdot y\right]\\
&=\lim_{s\rightarrow\infty}\sum_{r=1}^{N_s}\left[\lim_{t\rightarrow\infty}\sum_{j=1}^{N_{srt}}\theta_{x_r^s\ots y_j^{srt},\bar{x}_r^s\ots k_j^{srt}}\right](z\ots y),
\end{align*}
as desired. 
\end{proof}

\begin{lem}\label{subcat}
With the above notation, $\GG^B_c$ is a monoidal subcategory.
\end{lem}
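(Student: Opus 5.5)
Since $\GG^B_c$ is by definition a full subcategory of $\GG^B$, and $\GG^B$ is already monoidal, I only need to check that the objects of $\GG^B_c$ are closed under the monoidal structure: the identity object lies in $\GG^B_c$, and $(\sigma,X),(\omega,Y)\in\GG^B_c$ implies $(\sigma\omega,X\ots Y)\in\GG^B_c$. Associativity and unit isomorphisms, and their coherence, then restrict automatically from $\GG^B$. Here $\sigma$, $\omega$, and $\sigma\omega$ all have coefficient homomorphism $\pi$.

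\emph{Identity object.} I must show that $(\pi,\pi)$ is a covariant representation of $\pre AA_A$. Under the identification $\KK(A)\cong A$ we have $\theta_{a,b}\mapsto ab^*$, so $\pi^{(1)}(\theta_{a,b})=\pi(a)\pi(b)^*=\pi(ab^*)$. Also $\varphi_A(a)$ is left multiplication by $a$. Hence $\pi^{(1)}\circ\varphi_A=\pi$, using an approximate unit if needed.

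\emph{Products.} Suppose $\pi=\sigma^{(1)}\circ\varphi_X=\omega^{(1)}\circ\varphi_Y$. First, $X\ots Y$ is regular, since $\varphi_{X\ots Y}(a)=\varphi_X(a)\xt 1_Y$. This is compact by \lemref{compact}, because $\varphi_Y(A)\subset\KK(Y)$. It is injective since $\varphi_X$ is injective and $\varphi_Y$ is injective, so that $T\mapsto T\xt 1_Y$ is injective on $\LL(X)$. For covariance, fix $a\in A$ and write $\varphi_X(a)$ as in \lemref{compact}. This is possible because $\KK(X)$ is the closed span of the $\theta_{x,\bar x}$ and $X=XA$ by Cohen factorization, so each $\theta_{x,\bar x}=\theta_{x'a',\bar x}$. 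Then \lemref{compact} expresses $\varphi_{X\ots Y}(a)$ as an iterated limit of finite sums of $\theta_{x\ots y,\bar x\ots k}$. Since $(\sigma\omega)^{(1)}$ is a continuous homomorphism, applying it gives
\[
(\sigma\omega)^{(1)}(\varphi_{X\ots Y}(a))=\lim_s\sum_r\sigma(x_r^s)\Bigl[\lim_t\sum_j\omega(y_j^{srt})\omega(k_j^{srt})^*\Bigr]\sigma(\bar x_r^s)^*.
\]
The inner bracket equals $\omega^{(1)}(\varphi_Y(a_r^s))=\pi(a_r^s)$ by covariance of $\omega$. So the expression becomes
\[
\lim_s\sum_r\sigma(x_r^s)\pi(a_r^s)\sigma(\bar x_r^s)^*=\lim_s\sum_r\sigma^{(1)}(\theta_{x_r^sa_r^s,\bar x_r^s})=\sigma^{(1)}(\varphi_X(a))=\pi(a),
\]
by covariance of $\sigma$. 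This is exactly covariance of $\sigma\omega$.

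The main care point is justifying that $(\sigma\omega)^{(1)}$ can be passed through the nested limits, and the identity $(\sigma\omega)^{(1)}(\theta_{x\ots y,\bar x\ots k})=\sigma(x)\omega^{(1)}(\theta_{y,k})\sigma(\bar x)^*$. Both follow from continuity and linearity of $\omega^{(1)}$ and $(\sigma\omega)^{(1)}$. Everything else is bookkeeping.
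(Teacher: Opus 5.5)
Your proposal is correct and follows essentially the same route as the paper: you prove closure under products by expanding $\varphi_{X\ots Y}(a)=\varphi_X(a)\xt 1_Y$ via \lemref{compact}, applying $(\sigma\omega)^{(1)}$ through the nested limits, and then using covariance of $\omega$ for the inner bracket and covariance of $\sigma$ for the outer sum. Your additional checks, that the unit $(\pi,\pre AA_A)$ is covariant and that $X\ots Y$ is again regular, are correct and cover points the paper leaves implicit.
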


\begin{proof}
We must prove that $\GG^B_c$ is closed under products:
for objects $(\sigma,X),(\omega,Y)$,
we must show that if $\sigma$ and $\omega$ are both \cp\ covariant
then so is the product $(\sigma\omega,X\ots Y)$.
Write $\pi$ for the common \rep\ of $A$. Then for $a\in A$, by using  Lemma~\ref{compact}, we get 
\begin{align*}
(\sigma\omega)^{(1)}\circ \varphi_{X\ots Y}(a)
&=(\sigma\omega)^{(1)}(\varphi_X(a)\ots 1_Y)\\
&=\lim_{s\rightarrow\infty}\sum_{r=1}^{N_s}\left[\lim_{t\rightarrow\infty}\sum_{j=1}^{N_{srt}}\sigma(x_r^s)\omega(y_j^{srt})\omega(k_j^{srt})^*\sigma(\bar{x}_r^s)^*\right]\\
&=\lim_{s\rightarrow\infty}\sum_{r=1}^{N_s}\sigma(x_r^s)\left[\omega^{(1)}\circ \lim_{t\rightarrow\infty}\sum_{j=1}^{N_{srt}}\theta_{y_{j}^{srt},k_j^{srt}}\right]\sigma(\bar{x}_r^s)^*\\
&=\lim_{s\rightarrow\infty}\sum_{r=1}^{N_s}\sigma(x_r^s)\pi(a_r^s)\sigma(\bar{x}_r^s)^*\\
&=\sigma^{(1)}\circ \varphi_X(a)\\
&=\pi(a). 
\end{align*}
The last step follows from the fact that $(\pi,\sigma)$ is covariant.  
\end{proof}

The following lemma is routine, and is the whole reason for introducing the auxiliary \moncat\ $\GG^B$:
\begin{lem}\label{ps iff rep}
Let $X$ be a \ps\ over $\N^k$
in the \moncat\
$\GG$ of $A$-\corr s,
and let $B$ be a \csta.
Then $(\psi,X)$ is a \ps\ in $\GG^B$
iff
$\psi$ is a \rep\ of $X$ in $B$.
\end{lem}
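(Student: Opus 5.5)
The plan is to unwind both definitions and compare them term by term. Fix the \ps\ $X$ in $\GG$, with its structure \iso s $\mu_{s,t}:X_s\ots X_t\variso X_{s+t}$, so that the semigroup multiplication is $xy=\mu_{s,t}(x\ots y)$. Take a family $\psi=\{\psi_s\}_{s\in\N^k}$ in which each $\psi_s$ is a \rep\ of $X_s$ in $B$ with coefficient \hm\ $\pi=\psi_0$. We ask when the same \iso s $\mu_{s,t}$, viewed as maps
\[
(\psi_s\psi_t,X_s\ots X_t)\to(\psi_{s+t},X_{s+t}),
\]
are \mor s in $\GG^B$. By definition of \mor s in $\GG^B$, this happens exactly when
\[
\psi_{s+t}\circ\mu_{s,t}=\psi_s\psi_t .
\]
Evaluating on elementary tensors $x\ots y$ turns this into
\[
\psi_{s+t}(xy)=\psi_s(x)\psi_t(y),
\]
which is precisely the multiplicativity axiom for a \rep\ of $X$. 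Since both maps are linear and continuous and elementary tensors span a dense subspace of $X_s\ots X_t$, equality on elementary tensors is equivalent to equality of the maps. This gives the core equivalence, in both directions.

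Next I would handle the bookkeeping. First, the identity object $1_{\GG^B}=(\pi,\pre AA_A)$ requires $\psi_0=\pi$ on $X_0=A$. On the \rep\ side this is automatic: the coefficient map of $\psi_0$ is the \rep\ of $X_0=A$. On the $\GG^B$ side it is forced by the definition. Second, I would check that the coherence conditions (associativity, and compatibility with the unit \iso s $A\ots X_s\simeq X_s$) carry over from $\GG$ to $\GG^B$. The forgetful functor $(\sigma,X)\mapsto X$ is faithful and strict monoidal, so any diagram of \mor s in $\GG^B$ commutes iff its image in $\GG$ commutes, and those images commute because $X$ is a \ps\ in $\GG$. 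The unit \iso\ $A\ots X_s\to X_s$, $a\ots x\mapsto ax$, is a \mor\ in $\GG^B$ because $\psi_s(ax)=\pi(a)\psi_s(x)$, which holds since $\psi_s$ has coefficient \hm\ $\pi$.

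The main subtlety is making precise what ``$(\psi,X)$ is a \ps\ in $\GG^B$'' means. I would interpret it as: the underlying \ps\ in $\GG$ is $X$ with its given structure \iso s, and these lift to $\GG^B$. Under that reading the argument above is the whole proof. If instead one allowed arbitrary \iso s in $\GG^B$, one would only recover a \rep\ of a \ps\ isomorphic to $X$, so I would state the convention explicitly at the start of the proof.
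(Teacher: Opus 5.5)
Your proposal is correct and matches the paper, which gives no proof and calls the lemma routine; your unwinding is exactly that routine check. The structure isomorphisms of $X$ are morphisms in $\GG^B$ precisely when $\psi_{s+t}(xy)=\psi_s(x)\psi_t(y)$, coherence transfers along the faithful strict monoidal forgetful functor $\GG^B\to\GG$, and your convention that $(\psi,X)$ carries the structure isomorphisms of $X$ agrees with how the paper denotes product systems in $\GG^B$.
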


Here is our desired tool for generating \rep s of \ps s over $\N^k$:

\begin{lem}\label{replem}
Suppose that $X$ is a \ps\ over $\N^k$ of $A$-\corr s,
with \gensys\ $(Y,\theta)$.
Additionally, suppose that
we are given a \csta\ $B$ and
\rep s $(\sigma_0,\sigma_i)$ of $Y_i$ in $B$ for $i=1,\dots,k$.

Then there is a 
representation $\psi$ of $X$ in $B$ such that
$\psi_{e_i}=\sigma_i$ for all $i$, and $\psi_0=\sigma_0$,
iff
\begin{equation}\label{intertwine}
\sigma_i\sigma_j=\sigma_j\sigma_i\circ \theta_{ij}\righttext{for all}i,j.
\end{equation}
Moreover, $\psi$ is essentially unique,
and is \cp\ covariant if every $\sigma_i$ is.
\end{lem}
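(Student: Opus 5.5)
The plan is to transport the whole problem into the auxiliary monoidal category $\GG^B$ and then apply \thmref{fs} and \lemref{ps iff rep}. Throughout, the fixed coefficient \hm\ of $A$ in $B$ will be $\pi=\sigma_0$.

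For the forward implication, suppose $\psi$ is a \rep\ of $X$ with $\psi_{e_i}=\sigma_i$. Let $\theta_{ij}$ be the composite of the product-system \iso s
\[
Y_i\ots Y_j\simeq X_{e_i+e_j}\simeq Y_j\ots Y_i.
\]
This is how the \gensys\ arises from $X$ under \thmref{fs}. The multiplicativity of $\psi$ gives $\sigma_i\sigma_j(x\ots y)=\psi_{e_i+e_j}(xy)$, and the right-hand side is also $\sigma_j\sigma_i(\theta_{ij}(x\ots y))$. Hence \eqref{intertwine} holds on elementary tensors, and by linearity and continuity it holds everywhere.

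For the converse, assume \eqref{intertwine}.
\begin{enumerate}
\item Each pair $(\sigma_i,Y_i)$ is an object of $\GG^B$ with coefficient \hm\ $\sigma_0$.
\item Condition \eqref{intertwine} says precisely that each $\theta_{ij}$ is a morphism
\[
(\sigma_i\sigma_j,Y_i\ots Y_j)\to(\sigma_j\sigma_i,Y_j\ots Y_i)
\]
in $\GG^B$.
\item The properties $\theta_{ii}=\id$, symmetry, and the \hex\ hold because they already hold in $\GG$. In $\GG^B$ these are conditions only on the underlying \corr\ \iso s, since the forgetful functor $\GG^B\to\GG$ is faithful and strict monoidal.
\item So $(\sigma,Y,\theta)$ is a \gensys\ in $\GG^B$. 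By \thmref{fs} applied in $\GG^B$, there is a \ps\ $(\psi,X')$ in $\GG^B$ with $(\psi_{e_i},X'_{e_i})=(\sigma_i,Y_i)$ and $(\psi_0,X'_0)=(\sigma_0,A)$.
\item Forgetting to $\GG$, the family $X'$ is a \ps\ over $\N^k$ generated by $(Y,\theta)$. By the uniqueness part of \thmref{fs} it is isomorphic to $X$, and we transport $\psi$ along this \iso.
\item \lemref{ps iff rep} then says $\psi$ is a \rep\ of $X$, with $\psi_{e_i}=\sigma_i$ and $\psi_0=\sigma_0$.
\end{enumerate}

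Essential uniqueness follows in the same way from the uniqueness clause of \thmref{fs} in $\GG^B$. Concretely, it follows because each $X_s$ is spanned by products of elements of the $Y_i$. Indeed, writing $s=\sum_i s_ie_i$, we have $X_s\simeq Y_1^{\xt s_1}\ots\cdots\ots Y_k^{\xt s_k}$, and $\psi_s$ is forced on these products.

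For covariance, if every $\sigma_i$ is \cp\ covariant, the same construction works inside the monoidal subcategory $\GG^B_c$ of \lemref{subcat}. Alternatively, note directly that $\psi_s$ corresponds to the product representation $\sigma_1^{s_1}\cdots\sigma_k^{s_k}$ under the \iso\ above. Iterating \lemref{subcat} then shows that this product is covariant, and covariance is preserved under \corr\ \iso s compatible with the \rep s.

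The main obstacle is justifying that \thmref{fs} really applies in $\GG^B$. That theorem requires the associativity and unit constraints of $\GG^B$ to be coherent, and requires the product system it produces to sit over the given $X$ rather than just some isomorphic copy. I would handle this by checking that the forgetful functor $\GG^B\to\GG$ is strict monoidal and faithful, with coherence constraints lifted from $\GG$; these constraints are morphisms in $\GG^B$ because $\sigma(x)(\omega(y)\rho(z))=(\sigma(x)\omega(y))\rho(z)$. After that, the identification with $X$ is just the uniqueness in $\GG$.
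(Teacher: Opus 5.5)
Your proposal is correct and follows essentially the same route as the paper. Both show that $(\sigma,Y,\theta)$ is a \gensys\ in $\GG^B$ (with \eqref{intertwine} making each $\theta_{ij}$ a morphism there and the hexagonal relations inherited from $\GG$), then invoke \thmref{fs} and \lemref{ps iff rep}, and obtain covariance by working in $\GG^B_c$; your direct computation for the ``only if'' direction, via $\psi_{e_i+e_j}$ and the multiplication isomorphisms that define $\theta_{ij}$, is just a more explicit version of what the paper extracts from $(\psi,X)$ being a product system in $\GG^B$.
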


\begin{proof}
First assume \eqref{intertwine}.
To get a \rep\ $\psi$ of $X$ in $B$,
by \lemref{ps iff rep} it suffices to show that $(\sigma,Y,\theta)$ is a \gensys\ in $\GG^B$.
The only nonobvious property is the \hex.
For this purpose, consider the diagram
\[
\begin{tikzcd}
    &Y_j\ots Y_i\ots Y_\ell
    \arrow[dr,"1\ots \theta_{i,\ell}"]
    \arrow[dd,"\sigma_j\sigma_i\sigma_\ell"]
    \\
    Y_i\ots Y_j\ots Y_\ell
    \arrow[ur,"\theta_{ij}\ots 1"]
    \arrow[dd,"1\ots \theta_{j\ell}"']
    \arrow[dr,"\sigma_i\sigma_j\sigma_\ell"']
    &&Y_j\ots Y_\ell\ots  Y_i
    \arrow[dd,"\theta_{j\ell}\ots 1"]
    \arrow[dl,"\sigma_j\sigma_\ell \sigma_i"]
    \\
    &B
    \\
    Y_i\ots Y_\ell\ots  Y_j
    \arrow[dr,"\theta_{i,\ell}\ots 1"']
    \arrow[ur,"\sigma_i\sigma_\ell \sigma_j"]
    &&Y_\ell\ots Y_j\ots Y_i
    \arrow[ul,"\sigma_\ell \sigma_j\sigma_i"']
    \\
    &Y_\ell\ots  Y_i\ots Y_j.
    \arrow[ur,"1\ots \theta_{ij}"']
    \arrow[uu,"\sigma_\ell \sigma_i\sigma_j"]
\end{tikzcd}
\]
The $\theta$'s are all \corr\ \iso s,
the triple products of $\sigma$'s
make sense in the standard manner,
and it follows from \eqref{intertwine} that
the diagram commutes.
Thus $(\sigma,Y,\theta)$ is a \gensys\ in $\GG^B$.
The essential uniqueness of the associated \ps\ $(\psi,X)$
follows from \thmref{fs}.

For the covariance, assuming that the \rep s $\sigma_i$ of the $Y_i$ are covariant,
we need to know that the \rep\ $\psi$ of $X$ is also covariant.
This will follow from another application of \thmref{fs}
by restricting to the monoidal subcategory $\GG^B_c$.

For the converse direction, suppose that we have a \rep\ $\psi$ of $X$ in $B$
such that
$\psi_{e_i}=\sigma_i$ for all $i$
and $\psi_0=\sigma_0$.
Then as above we know that $(\psi,X)$ is a \ps\ over $\N^k$ in $\GG^B$ such that
$(\psi_{e_i},X_{e_i})=(\sigma_i,Y_i)$
for all $i$.
It follows from \thmref{fs} that
$X$ is essentially unique,
and then a straightforward induction argument
shows that $\psi$ is (essentially) uniquely determined by $\sigma$.
\end{proof}

The
functor $\EE:\cscor\to \algcor$ from \cite{functor}
takes an object $X$ to the \cpa\ $\OO X$,
and a \mor\ $M:\pre AX_A\to \pre BY_B$
to a \mor\ $\EE M:\OO X\to \OO Y$.
The explicit formula is
\[
\EE M=[M\ots \OO X],
\]
with an explicit construction of the $\OO X-\OO Y$ \corr\ structure.

Since we are interested in working with \corr s and \kg s,
we need a formal connection:
in \cite[Example~3.1 and Corollary~4.4]{raesim},
Raeburn and Sims
relate \kga s to \cpa s of \ps s:
if $\Lambda$ is a regular \kg\ (i.e., row-finite with  no sources),
and if for each $n\in\N^k$ we let $X_n$ be the graph \corr\
associated to the directed graph with edges $\Lambda^n$ and vertices $\Lambda^0$,
then the \corr s $Y_i:=X_{e_i}$ (where $e_1,\dots,e_k$ are the standard generators of $\N^k$)
generate a \ps\ $\ast_{i=1}^k Y_i$, and
\[
\OO \Lambda
\simeq \OO (\ast_1^kY_j).
\]
Then to make the connection between \ps s and our iterative use of the second author's functor requires a result (\thmref{for product}) that grew out of \cite[Lemma~4.2 and Remark~4.3]{deaconu}.

\section{Abstract iterative process}\label{abstract}

To introduce our method cleanly, in this section we present an abstract categorical approach. However, we must clarify that, as we indicate in \secref{product}, unfortunately we will not be using the categorical approach when we develop our method for product systems.

Starting with any category $\CC$,
a standard construction is the \emph{arrow category},
written $\CC^\to$,
which has
\begin{itemize}
\item
Objects: arrows $f:a\to b$ in $\CC$;

\item
Morphisms: commuting squares in $\CC$:
\[
\begin{tikzcd}
a \arrow[r,"f"] \arrow[d,"h"']
&b \arrow[d,"h'"]
\\
c \arrow[r,"g"']
&d
\end{tikzcd}
\]
where we regard the pair $(h,h')$ as a morphism in $\CC^\to$ from $f$ to $g$.
\end{itemize}

Our interest is in what we call the \emph{intertwiner \textup (sub\textup )category},
which we write as $\II\CC$. The only difference is that we take the $f$ and $g$ to be endo\mor s, and we take $h=h'$:
\[
\begin{tikzcd}
a \arrow[r,"f"] \arrow[d,"h"']
&a \arrow[d,"h"]
\\
b \arrow[r,"g"']
&b.
\end{tikzcd}
\]
So now we regard this as a morphism $h:f\to g$ in $\II\CC$.

We assume that we have a functor $E:\II\CC\to \CC$,
but (for secret yet obvious reasons) we denote what $E$ does to objects by $\OO$:
\begin{itemize}
\item
Objects: $f\mapsto \OO f$

\item
Morphisms: $(h:f\to g) \mapsto (E h:\OO f\to \OO g)$.
\end{itemize}

Now suppose that we have 
an object $a$ of $\CC$
and
a commuting set of endo\mor s $S\subset \CC(a)$.
Choose $f_1\in S$.
Then we can
regard $S$ as a 
commuting subset of
of $\II\CC(f_1)$.
This works, because for any $f\in S$ we have a commuting square
\[
\begin{tikzcd}
a \arrow[r,"f_1"] \arrow[d,"f"']
&a \arrow[d,"f"]
\\
a \arrow[r,"f_1"']
&a.
\end{tikzcd}
\]
We apply the functor $E$ to get a 
commuting subset
$E S$ of $\CC(\OO f_1)$.

We can iterate this process:
in general we have
\[
E^jS\subset \CC(\OO E^{j-1}f_j)
\]
and then choosing $f_{j+1}\in S$ and  regarding
\[
E^jS\subset \II\CC(E^jf_{j+1}),
\]
we get
\[
E^{j+1}S\subset \CC(\OO E^jf_{j+1}).
\]

\section{Main result}\label{product}

The context for our iterative process involves \ps s.
We could let $\CC=\algcor$ and 
$\DD=\cscor$\footnote{Remember that we are restricting to regular \corr s.}
and 
use the
functor $\EE:\DD\to\CC$ from \cite{functor}.
At first glance it looks like we're ready to perform our iterative process.
But there 
would
be numerous roadblocks along the way.
For example, $\DD$ is \emph{not} the intertwiner category $\II\CC$ discussed in \secref{abstract};
the objects of the latter would be \iso\ classes of \corr s,
whereas the objects of $\DD$ are \corr s themselves.
Navigating these roadblocks 
would
involve a lot of gymnastics passing back and forth between \corr s and their \iso\ classes.
We have decided to eschew this,
at the expense of losing the category theory itself.
Instead, we refrain from actually using the categories $\CC$ and $\DD$, instead working directly with actual \corr s rather than \iso\ classes.

More precisely, we work with 
\gensys s $(Y,\theta)$ (see \defnref{gensys}).
Fowler and Sims \cite{fowsim} proved that in fact the existence of such a
\gensys\
is
necessary and sufficient for having an actual \ps\ over $\N^k$
(as we explained in \lemref{replem}),
and we exploit this tool extensively.

Let 
$(Y,\theta)$ be a \gensys\ of $A$-\corr s.
Let $Y_1=X_{e_1}$.Then for each 
$i=1,\dots,k$
the \iso\ 
\[\theta_{1s}:Y_1\ots Y_i\to Y_i\ots Y_1\]
gives rise to the
$\OO Y_1$-\corr\ $\EE Y_i=Y_i\ots \OO Y_1$.
For our iterative process, we now
define 
a \gensys\ $(\EE Y,\RR \theta)$,
with
\begin{itemize}
\item
$\EE Y=\{\EE Y_i:i=1,\dots,k\}$
and

\item
$\RR \theta=\{\RR \theta_{ij}:i,j=1,\dots,k\}$.
\end{itemize}

In \cite[Theorem~5.1]{functor} the second author shows that the right $\OO Y_1$-module \iso\ 
\[
\pre {\OO Y_1}(Y_i\ots \OO Y_1)\otso (Y_j \ots \OO Y_1)_{\OO Y_1}\variso
\pre {\OO Y_1}(Y_j\ots Y_i\ots \OO Y_1)_{\OO Y_1}
\]
defined on elementary tensors by $x_i\ots T\otso \xi\mapsto x_i\ots T\cdot \xi$, where $x_i\in Y_i, T\in  \OO Y_1,$ and $\xi\in  (Y_j \ots \OO Y_1),$ preserves left $\OO Y_1$ module structure. In \cite[Theorem~5.1]{functor} this map was denoted by $U$. Since we will have this \iso\ for any $i,j=1,\dots,k$, in this paper we denote this map by $\nu_{ij}.$ We now define each $\RR \theta_{ij}$ as follows. 

\begin{lem}\label{RU}
For each $i,j$ the map
\[
\RR \theta_{ij}:\EE Y_i\xt_{\OO Y_1} \EE Y_j\variso \EE Y_j\xt_{\OO Y_1} \EE Y_i
\]
defined by
\[
\RR \theta_{ij}=\nu_{ji}\inv (\theta_{ij}\xt 1_{\OO Y_1})\nu_{ij}
\]
is an $\OO Y_1$-\corr\ \iso.
\end{lem}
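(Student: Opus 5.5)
The plan is to write $\RR\theta_{ij}$ as a composite of three maps and check that each one is an $\OO Y_1$-\corr\ \iso. Since a composition of \corr\ \iso s is again one, that will finish the proof.

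First, by \cite[Theorem~5.1]{functor}, the map
\[
\nu_{ij}:\EE Y_i\otso \EE Y_j\variso Y_j\ots Y_i\ots\OO Y_1
\]
is a right $\OO Y_1$-module \iso\ that preserves the left $\OO Y_1$-action. A bijective right-module map of Hilbert modules that preserves the right action and the inner products is unitary, so $\nu_{ij}$ is a \corr\ \iso. The same holds for $\nu_{ji}$, hence for $\nu_{ji}\inv$.

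Next I will check the index ordering. We have $\nu_{ij}$ landing in $Y_j\ots Y_i\ots\OO Y_1$ and $\nu_{ji}$ landing in $Y_i\ots Y_j\ots\OO Y_1$. So the middle map must really go from $Y_j\ots Y_i\ots\OO Y_1$ to $Y_i\ots Y_j\ots\OO Y_1$. That is $\theta_{ji}\xt 1=\theta_{ij}\inv\xt1$, and by symmetry of $\theta$ this is still an \iso\ induced by $\theta$. I will read the formula in the statement with the appropriate orientation, using $\theta_{ij}=\theta_{ji}\inv$.

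For the middle map, $\theta_{ij}\xt 1_{\OO Y_1}$ is automatically an isomorphism of right Hilbert $\OO Y_1$-modules, because $\theta_{ij}$ is an $A$-\corr\ \iso. Here $\OO Y_1$ is viewed as an $A$-$\OO Y_1$ \corr\ via $\pi_{Y_1}$.

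The substantive point is the left $\OO Y_1$-action on $Y_j\ots Y_i\ots\OO Y_1$, which is transported from $\EE Y_i\otso\EE Y_j$. By the construction in \cite{functor}, this action is determined by the covariant \rep\ $(\pi,t)$ of $Y_1$ defined via $\theta_{1j}$ and $\theta_{1i}$. Concretely, $t_{Y_1}(y)$ acts by moving $y$ past $Y_j$ and then $Y_i$ using $\theta_{1j}\xt1$ followed by $1\xt\theta_{1i}\xt1$, and then multiplying into $\OO Y_1$. Since $\OO Y_1$ is generated by $\pi_{Y_1}(A)$ and $t_{Y_1}(Y_1)$, it suffices to check that $\theta_{ij}\xt1$ intertwines these generators. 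For $A$ this is just left $A$-linearity of $\theta_{ij}$. For $Y_1$ it amounts to the equality
\[
(1\xt\theta_{ij})(\theta_{1i}\xt1)(1\xt\theta_{1j})=(\theta_{1j}\xt1)(1\xt\theta_{1i})(\theta_{ij}\xt1)
\]
as maps $Y_1\ots Y_i\ots Y_j\to Y_j\ots Y_i\ots Y_1$, possibly with $i,j$ swapped depending on conventions. This is exactly the hexagonal relation of \defnref{gensys} applied to the triple $(1,i,j)$, rearranged using symmetry.

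The main obstacle is this last step: unwinding the precise left action from \cite[Theorem~5.1]{functor} on elementary tensors $y\ots x_j\ots x_i\ots T$, so that the hexagon appears in exactly the right form. Continuity and density of the elementary tensors then extend the identity to the whole module.
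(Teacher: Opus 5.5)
Your proposal is correct and takes essentially the same route as the paper. The paper also takes the $\nu$'s to be correspondence isomorphisms by the second author's functor theorem. It then reduces to showing that $\theta_{ij}\xt 1_{\OO Y_1}$ intertwines the left actions of the generators $\pi_{Y_1}(a)$ and $t_{Y_1}(x)$, and handles $t_{Y_1}(x)$ by commuting $\theta_{ij}\xt 1$ past $1\xt V$ and invoking the hexagonal relation for the triple $(1,i,j)$.

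One bookkeeping correction: the elementary-tensor formula $x_i\ots T\otso\xi\mapsto x_i\ots T\cdot\xi$ shows that $\nu_{ij}$ lands in $Y_i\ots Y_j\ots\OO Y_1$. The codomain written where $\nu_{ij}$ is first introduced is a slip, and the appendix has it right. So no reorientation of $\theta_{ij}$ is needed. The identity to check is $(\theta_{ij}\xt 1_{Y_1})(1_{Y_i}\xt\theta_{1j})(\theta_{1i}\xt 1_{Y_j})=(1_{Y_j}\xt\theta_{1i})(\theta_{1j}\xt 1_{Y_i})(1_{Y_1}\xt\theta_{ij})$ on $Y_1\ots Y_i\ots Y_j$, which is literally the hexagon for $(1,i,j)$. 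The version you displayed does not type-check as written.
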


 We now have a family $\EE Y$ of $\OO Y_1$-\corr s
 and a family $\RR \theta$ of \iso s.

To prove Proposition~\ref{M prop} we must show that $(\EE X,\RR U)$ satisfies the 
symmetry and hexagonal relations.
This requires Lemmas~\ref{M lem 1}--\ref{M lem 2};
the proofs of these lemmas are messy, and
to avoid interrupting the flow we relegate them
 to \apxref{appx}.

 \begin{prop}\label{M prop}
 $(\EE Y,\RR \theta)$ is a \gensys.
 \end{prop}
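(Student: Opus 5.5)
By \lemref{RU}, each $\RR\theta_{ij}$ is already an $\OO Y_1$-\corr\ \iso. So to show that $(\EE Y,\RR\theta)$ is a \gensys\ in the \moncat\ of $\OO Y_1$-\corr s, it remains to verify the three conditions of \defnref{gensys}: $\RR\theta_{ii}=\id$, symmetry, and the \hex. The plan is to transport each identity from the corresponding identity for $(Y,\theta)$ by conjugating with the maps $\nu_{ij}$, and their triple analogues, from \cite[Theorem~5.1]{functor}.

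The first two conditions are formal. For $i=j$ we have $\theta_{ii}=\id$, so
\[
\RR\theta_{ii}=\nu_{ii}\inv(\id\xt 1)\nu_{ii}=\id.
\]
For symmetry we compute
\[
\RR\theta_{ji}=\nu_{ij}\inv(\theta_{ji}\xt 1)\nu_{ji}=\nu_{ij}\inv(\theta_{ij}\inv\xt 1)\nu_{ji}=\bigl(\nu_{ji}\inv(\theta_{ij}\xt 1)\nu_{ij}\bigr)\inv=\RR\theta_{ij}\inv,
\]
using $\theta_{ji}=\theta_{ij}\inv$ and functoriality of $-\xt 1_{\OO Y_1}$.

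For the \hex, I introduce for each triple $i,j,\ell$ an $\OO Y_1$-\corr\ \iso
\[
\nu_{ij\ell}:\EE Y_i\otso\EE Y_j\otso\EE Y_\ell\variso Y_\ell\ots Y_j\ots Y_i\ots\OO Y_1,
\]
defined by $\nu_{ij\ell}=\nu_{j\ell}^{(\cdot)}\circ(\nu_{ij}\otso 1)$, or equivalently by $(1\otso\nu_{j\ell})$ followed by $\nu$-type maps. Note that, as in the displayed map before \lemref{RU}, $\nu_{ij}$ reverses the order of the $Y$ factors. The appendix lemmas would establish two compatibility relations:
\begin{enumerate}
\item an associativity-type identity showing that the two ways of building $\nu_{ij\ell}$ agree;
\item the identities
\[
\nu_{ji\ell}\circ(\RR\theta_{ij}\otso 1)=(1_{Y_\ell}\ots\theta_{ij}\ots 1)\circ\nu_{ij\ell},\qquad
\nu_{i\ell j}\circ(1\otso\RR\theta_{j\ell})=(\theta_{j\ell}\ots 1_{Y_i}\ots 1)\circ\nu_{ij\ell}.
\]
\end{enumerate}
These relations should reduce to checking on elementary tensors $x_i\ots T\otso x_j\ots S\otso x_\ell\ots R$. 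The $\OO Y_1$-coefficients $T,S$ must be moved across $Y_j$ and $Y_\ell$ using the left action of $\OO Y_1$ on $Y_\ell\ots\OO Y_1$ from \cite{functor}. That action is built from $\theta_{1\ell}$ on the generators $t_{Y_1}(y)$, and it commutes with $\theta_{ij}\xt 1$ because of the \hex\ for $(Y,\theta)$ with one index equal to $1$.

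With these in hand, each edge of the hexagon for $\RR\theta$ is conjugate, via the $\nu_{\cdot\cdot\cdot}$ maps, to the corresponding edge of the hexagon for $\theta$, tensored with $1_{\OO Y_1}$ and with the factor order reversed. Reversing the order turns the hexagon for the triple $(i,j,\ell)$ into the hexagon for $(\ell,j,i)$, which commutes by hypothesis. Hence the hexagon for $\RR\theta$ commutes.

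The main obstacle is the compatibility in step (2): showing that $\theta_{ij}\xt 1$ intertwines the left $\OO Y_1$-actions, which involve the spanning elements $t^n(y)t^m(y')^*$ and covariance. To handle this, I would first verify the relation on generators $\pi_{Y_1}(a)$ and $t_{Y_1}(y)$ using the hexagon with index $1$. I would then extend by multiplicativity, adjoints, linearity and continuity. The adjoint step uses that everything is isometric and that the action is a $*$-\hm.
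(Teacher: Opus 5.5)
Your argument is essentially the paper's. Symmetry is handled by the same formal computation. The \hex\ are handled by conjugating with the $\nu$-isomorphisms, so that the hexagon for $\RR\theta$ becomes the hexagon for $\theta$ tensored with $1_{\OO Y_1}$. The paper conjugates whole composites at once (\eqref{one} and \eqref{two}, assembled from \lemref{M lem 1}), while you go edge by edge through a triple isomorphism $\nu_{ij\ell}$; the two are equivalent. Your ``main obstacle,'' the left $\OO Y_1$-linearity of $\theta_{ij}\xt 1_{\OO Y_1}$, is precisely \lemref{RU}. Its proof is the same argument you sketch (check on generators, use the hexagon with one index equal to $1$), so no new work is needed there.

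One correction: $\nu_{ij}$ does not reverse the order of the factors. The formula $x_i\ots T\otso\xi\mapsto x_i\ots T\cdot\xi$ lands in $Y_i\ots Y_j\ots\OO Y_1$. The codomain $Y_j\ots Y_i\ots\OO Y_1$ displayed before \lemref{RU} is a misprint: the definition $\RR\theta_{ij}=\nu_{ji}\inv(\theta_{ij}\xt 1)\nu_{ij}$ only composes under the unreversed reading, and the appendix uses that reading. Under your reversed reading, the two identities in your step (2) do not type-check. Take instead $\nu_{ij\ell}:\EE Y_i\otso\EE Y_j\otso\EE Y_\ell\to Y_i\ots Y_j\ots Y_\ell\ots\OO Y_1$. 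Then the correct identities are
\[
\nu_{ji\ell}(\RR\theta_{ij}\otso 1)=(\theta_{ij}\ots 1_{Y_\ell}\ots 1)\nu_{ij\ell},
\qquad
\nu_{i\ell j}(1\otso\RR\theta_{j\ell})=(1_{Y_i}\ots\theta_{j\ell}\ots 1)\nu_{ij\ell}.
\]
These are parts (1) and (2) of \lemref{M lem 1}; the second is where \lemref{RU} enters. Part (3) supplies your associativity statement. With this fixed, the hexagon for $\RR\theta$ on $(i,j,\ell)$ is conjugate to the hexagon for $\theta$ on the same triple. Your ``reversal to $(\ell,j,i)$'' step should simply be dropped.
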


Now play the game again:
using $Y_2$ instead of $Y_1$,,
we get a \gensys\ $(\EE^2 Y,\RR^2 \theta)$.
We can continue to iterate inductively,
at the $i$th step 
using $Y_i$,
and getting a \gensys\ $(\EE^i Y,\RR^i \theta)$.

We formalize the general step in the iterative process:

\begin{thm}\label{for product}
With the above set-up,
for each $i<k$ and $j=1\dots,k$ 
we have
\begin{equation}
\EE^i Y_j\simeq Y_j\ots \OO(\ast_1^{i}Y_j),
\end{equation}
and, taking $j=e_{i+1}$,
\begin{equation}\label{OEX}
\OO \EE^i Y_{i+1}\simeq \OO(\ast_1^{i+1}Y_j),
\end{equation}
where $\ast_1^{i+1}Y_j$ denotes the \ps\ over $\N^{i+1}$ generated in the obvious way from $Y_1,\dots,Y_{i+1}$.
\end{thm}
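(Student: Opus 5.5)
Both statements will be proved together by induction on $i$. The heart of the argument is the base case $i=1$, which is really a theorem about a single step: for a \gensys\ $(Y,\theta)$ of $A$-\corr s,
\[
\OO(\EE Y_2)\simeq \OO(Y_1*Y_2).
\]
The first statement for $i=1$ is just the definition $\EE Y_j=Y_j\ots\OO Y_1$.

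For the inductive step, assume $\EE^iY_j\simeq Y_j\ots\OO(\ast_1^iY_\ell)$. Since $\EE^iY$ is again a \gensys\ by \propref{M prop} applied $i$ times, we apply the base case to the \gensys\ $(\EE^iY,\RR^i\theta)$ with the role of $Y_1$ played by $\EE^iY_{i+1}$. This gives
\[
\EE^{i+1}Y_j=\EE^iY_j\otimes\OO\EE^iY_{i+1}\simeq Y_j\ots \OO(\ast_1^{i+1}Y_\ell).
\]
Here we use the associativity $(Y_j\ots \OO(\ast_1^i))\otimes_{\OO(\ast_1^i)}\OO(\ast_1^{i+1})\simeq Y_j\ots\OO(\ast_1^{i+1})$, once we know the embedding $\OO(\ast_1^i)\hookrightarrow\OO(\ast_1^{i+1})$ is nondegenerate. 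The resulting identification is compatible with $\RR^{i+1}\theta$ and with $\theta$.

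To keep the induction clean, I would instead prove the following general statement. Let $(Y,\theta)$ be a \gensys\ of $A$-\corr s over $\N^{m+1}$, and set $Z_j=Y_j\ots\OO(\ast_1^mY_\ell)$, viewed as a \gensys\ over $\OO(\ast_1^m)$ via intertwiners built from the $\theta$'s. Then
\[
\OO Z_{m+1}\simeq \OO(\ast_1^{m+1}Y_\ell),
\]
and $Z_j\otimes\OO Z_{m+1}\simeq Y_j\ots\OO(\ast_1^{m+1})$. Applied with $m=i$, this gives both parts of the theorem.

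To prove $\OO Z\simeq \OO(X)$, where $Z=Z_{m+1}$, $X=\ast_1^{m+1}Y_\ell$ and $C=\OO(\ast_1^m)$, I would construct mutually inverse homomorphisms from universal properties.

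\emph{Map $\OO X\to\OO Z$.} Let $\rho=\pi_Z$ on $C\supset A$, and put $\sigma_\ell=\pi_Z\circ\psi_{e_\ell}$ for $\ell\le m$, where $\psi$ is the universal representation of $\ast_1^m$. Define $\sigma_{m+1}(y)=t_Z(y\ots 1)$, interpreted via multipliers so that $t_Z(y\ots c)=\sigma_{m+1}(y)\pi_Z(c)$. Each $\sigma_\ell$ is covariant: for $\ell\le m$ this is inherited from covariance in $C$, and for $\ell=m+1$ it follows from covariance of $t_Z$ together with \lemref{compact}, which shows that $\varphi_Z(a)=\varphi_{Y_{m+1}}(a)\otimes 1$ is represented correctly. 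The intertwining relation \eqref{intertwine} between $\sigma_\ell$ and $\sigma_{m+1}$ comes from the left $C$-action on $Z$ being defined through the $\theta_{\ell,m+1}$, as in the construction of $\nu$ in \cite{functor}. \lemref{replem} then yields a covariant representation of $X$, hence a homomorphism $\OO X\to\OO Z$.

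\emph{Map $\OO Z\to\OO X$.} The inclusion of $\ast_1^m$ into $X$ induces $\iota:C\to\OO X$. Define $t(y\ots c)=\psi^X_{e_{m+1}}(y)\iota(c)$. One checks that $(\iota,t)$ is a covariant representation of $Z$. The left-action compatibility $\iota(c)t(z)=t(cz)$ is again the intertwining via $\theta$, checked on generators $c=\psi_{e_\ell}(x)$. Covariance reduces, via \lemref{compact}, to covariance of $\psi^X_{e_{m+1}}$ on $A$, extended to $C$ by density of spans of $\psi_s(x)\psi_t(x')^*$.

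\emph{Inverses.} The two composites fix generators, so they are mutually inverse. Alternatively, one can show the first map is injective by gauge-invariant uniqueness (\lemref{gauge-lem}): combine the gauge action of $\OO Z$ with the $\T^m$ action induced on $\OO Z$ from the gauge action of $C$ via \propref{comp}, and use that $\pi_Z$ is injective on $C\supset A$.

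I expect the main obstacle to be the left-module compatibility in both directions: verifying that the left $C$-action on $Z=Y_{m+1}\ots C$ (from \cite[Theorem~5.1]{functor}) is implemented by $\theta$ in a way that matches \eqref{intertwine}, particularly on adjoints $\psi_{e_\ell}(x)^*$. I would first check this on the generators $\pi(a)$ and $\psi_{e_\ell}(x)$ and then pass to adjoints using inner products. The associativity and coherence needed to identify $\RR^{i+1}\theta$ with the transported $\theta$ is the remaining bookkeeping, handled like Lemmas~\ref{M lem 1}--\ref{M lem 2}.
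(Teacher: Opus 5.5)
Your architecture matches the paper's. Your general statement is exactly \corref{maincor} together with Proposition~\ref{Prop general iso} (with $n=m+1$, resp.\ $k=m+1$). Your homomorphism $\OO X\to\OO Z$, built from $\pi_Z\circ\psi_{e_\ell}$ and $\sigma_{m+1}(y)=t_Z(y\ots 1)$ via Cohen factorization and \lemref{replem}, is the one in \lemref{firstiso} and \corref{maincor}. Your ``associativity'' identification $(y\ots c)\otimes S\mapsto y\ots\iota\inv(cS)$ is the paper's map $\phi$.

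The genuine difference is how you get injectivity. The paper uses gauge-invariant uniqueness (\lemref{gauge-lem}), which requires building a $\T^{m+1}$-action on $\OO Z$ from the correspondence automorphisms $(\gamma_z,1\otimes\gamma_z)$ and $(1,\beta_w)$ via Proposition~\ref{comp}. You instead build the inverse $\OO Z\to\OO X$ from the universal property of $\OO Z$, using $(\iota,t)$ with $t(y\ots c)=\psi^X_{e_{m+1}}(y)\iota(c)$. This gives an explicit inverse and avoids gauge actions entirely (even injectivity of $\iota$ is not needed), and the two composites are indeed the identity on generators.

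The price is that you must show $\iota(c)t(z)=t(c\cdot z)$ for \emph{all} $c\in C$. The set of such $c$ is evidently a closed subalgebra, but not evidently a $*$-subalgebra. Your remedy via inner products does not quite close this. It yields $t(w)^*\bigl[\iota(c^*)t(z)-t(c^*\cdot z)\bigr]=0$ for all $w\in Z$, and that forces the bracket to vanish only if you also know $\iota(c^*)t(z)$ lies in the closure of the range of $t$. There are two ways to finish:
\begin{enumerate}
\item Prove that range statement; it follows from Cuntz--Pimsner covariance of $\psi^X_{e_\ell}$ together with the commutation relations in $\OO X$.
\item Compute directly. By surjectivity of the absorption map $\V_\ell$, write $c'=\lim\sum_p\psi_{e_\ell}(x_p')c_p$. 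Then $\varphi_Z(\psi_{e_\ell}(x))^*(y\ots c')$ is obtained by applying $\theta_{m+1,\ell}$ to $y\ots x_p'$ and contracting against $x$. The result matches $\psi_{e_\ell}(x)^*\psi_{e_{m+1}}(y)\iota(c')$ by the product-system relation in $\OO X$.
\end{enumerate}
The paper's route never meets this issue. Its automorphism condition $(1\otimes\gamma_z)(T\xi)=\gamma_z(T)(1\otimes\gamma_z)(\xi)$ lives inside $Z$ and is automatically closed under adjoints.

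Two smaller points. First, your initial version of the inductive step applies the base case to $(\EE^iY,\RR^i\theta)$ with $\EE^iY_{i+1}$ as first correspondence. That would only identify $\OO\EE^{i+1}Y_{i+2}$ with the algebra of a rank-two product system over $\OO(\ast_1^iY_\ell)$, not with $\OO(\ast_1^{i+2}Y_\ell)$. The general statement you switch to is what is actually needed.

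Second, the isomorphism $Z_j\otimes_C\OO Z_{m+1}\simeq Y_j\ots\OO(\ast_1^{m+1}Y_\ell)$ is not mere associativity. The left $\OO Z_{m+1}$-action on the left side is defined through $\RR\theta_{m+1,j}$. You must check on the generators $t_Z(y\ots 1)$ and $\pi_Z(\psi_{e_\ell}(x))$ that it matches the $\theta$-defined action of \lemref{three}. This is exactly the computation carried out in Proposition~\ref{Prop general iso}, which your proposal asserts rather than performs.
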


The proof of this Proposition involves a number of technical theorems and lemmas, and we devote the next section to them. 

In the special case of \kga s\footnote{And note that this really is a special case --- see for example \cite[Theorem~5.4]{kpq2} for the case $k=2$.}, we finally arrive below at our main result,
which recovers
\cite[Theorem~6.8]{morph}
and
\cite[Theorem~2.6.12]{fletcher}
but via our ``bottom-to-top'' approach\footnote{and with the strong hypothesis of regularity}.
First, if $\Lambda$ is a \kg,  for for each $j\le k$ let
$\Lambda_j$ denote the $j$-graph formed from the set of all $\lambda\in \Lambda$
whose degree has coordinates $d(\lambda)_i=0$ for $i>j$.

\begin{cor}\label{main}
If $\Lambda$ is a regular \kg,
then $\OO \Lambda$ is isomorphic to 
the \cpa\ of 
a \corr\ over
$\OO \Lambda_{k-1}$
formed from $\Lambda$.
\end{cor}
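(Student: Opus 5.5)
The plan is to deduce Corollary~\ref{main} from \thmref{for product} together with the Raeburn--Sims identification of $k$-graph algebras with Cuntz--Pimsner algebras of product systems. Let $\Lambda$ be a regular \kg, and for each $i=1,\dots,k$ let $Y_i=X_{e_i}$ be the graph \corr\ over $A=C_0(\Lambda^0)$ associated to the edges $\Lambda^{e_i}$. Regularity of $\Lambda$ (row-finite, no sources) makes each $Y_i$ nondegenerate and regular in our sense. The factorization property of $\Lambda$ supplies isomorphisms
\[
\theta_{ij}:Y_i\ots Y_j\variso Y_j\ots Y_i.
\]
These send $\delta_\mu\ots\delta_\nu$ to $\delta_{\nu'}\ots\delta_{\mu'}$, where $\mu\nu=\nu'\mu'$ is the unique factorization with the opposite degree order. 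Uniqueness of factorizations in $\Lambda$ gives $\theta_{ii}=\id$ and the symmetry $\theta_{ij}=\theta_{ji}\inv$. It also gives the \hex: both ways around the hexagon compute the unique factorization of a path of degree $e_i+e_j+e_\ell$ in the order $(\ell,j,i)$. So $(Y,\theta)$ is a \gensys, and by \thmref{fs} it generates the product system $\ast_1^kY_j$, which agrees with the Raeburn--Sims product system $\{X_n\}$.

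Next I would identify the lower-rank pieces. For $j\le k$ the product system over $\N^j$ generated by $Y_1,\dots,Y_j$ is exactly the Raeburn--Sims product system of the $j$-graph $\Lambda_j$, since $\Lambda_j$ has the same vertices, edges $\Lambda^{e_1},\dots,\Lambda^{e_j}$, and the same factorization rules. Each $\Lambda_j$ is again regular, because the no-sources and row-finite conditions in the first $j$ directions are inherited. Hence Raeburn--Sims gives $\OO\Lambda_j\simeq\OO(\ast_1^jY_\ell)$ for every $j$, including $j=k$, where $\Lambda_k=\Lambda$.

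Now I apply \thmref{for product} with $i=k-1$. It gives $\OO\EE^{k-1}Y_k\simeq\OO(\ast_1^kY_j)\simeq\OO\Lambda$. The first assertion of that theorem identifies the \corr\ as
\[
\EE^{k-1}Y_k\simeq Y_k\ots\OO(\ast_1^{k-1}Y_j).
\]
This is a \corr\ over $\OO(\ast_1^{k-1}Y_j)\simeq\OO\Lambda_{k-1}$, built from the edges $\Lambda^{e_k}$ together with the left action coming from $\RR^{k-1}\theta$, that is, from the factorization rules of $\Lambda$. Transporting it along the isomorphism $\OO(\ast_1^{k-1}Y_j)\simeq\OO\Lambda_{k-1}$ yields a \corr\ over $\OO\Lambda_{k-1}$, formed from $\Lambda$, whose \cpa\ is isomorphic to $\OO\Lambda$. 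Transport of structure along a \csta\ isomorphism clearly preserves the \cpa\ up to isomorphism.

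The main obstacle is bookkeeping rather than analysis, since all the analytic content sits in \thmref{for product}. One point needs care: the coefficient algebra of $\EE^{k-1}Y_k$ is literally $\OO\EE^{k-2}Y_{k-1}$. I must use the isomorphism $\OO\EE^{k-2}Y_{k-1}\simeq\OO(\ast_1^{k-1}Y_j)$, which is \eqref{OEX} at level $k-2$, compatibly with the module structure of $\EE^{k-1}Y_k$. The natural approach is to note that \thmref{for product} is stated exactly in this form, and that the isomorphisms in it are \corr\ isomorphisms over these identifications. The degenerate case $k=1$ reduces to Pimsner's theorem for graph algebras.
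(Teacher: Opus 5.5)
Your proposal is correct and follows the paper's argument. Raeburn--Sims gives $\OO\Lambda\simeq\OO(\ast_1^kY_j)$, \thmref{for product} with $i=k-1$ gives $\OO(\ast_1^kY_j)\simeq\OO\EE^{k-1}Y_k$, and $\EE^{k-1}Y_k$ is a \corr\ over $\OO(\ast_1^{k-1}Y_j)\simeq\OO\Lambda_{k-1}$. You spell out what the paper's one-line proof leaves tacit: the factorization property yields the \gensys\ $(Y,\theta)$, $\Lambda_{k-1}$ is regular so Raeburn--Sims applies to it, and the coefficient algebra is identified via \eqref{OEX} at level $k-2$.
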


\begin{proof}
By the above, we have
\[
\OO \Lambda\simeq \OO(\ast_1^k Y_j)
\simeq \OO \EE^{k-1}Y_k,
\]
where $\EE^{k-1}Y_k$ is an $\OO \Lambda_{k-1}$-\corr.
\end{proof}

\section{The Structure of 
the Iterated Product  Systems}\label{detail}

In this section we aim to provide a better understanding of the nature of the  \gensys s $(\EE^i Y,\RR^i \theta)$, and to eventually provide a proof of \thmref{for product}. 

\begin{lem}\label{firstiso}
Consider a \ps\ $X$ over $\N^2$, with \gensys\ $(Y,\theta)$.
Then $\OO X\simeq \OO(Y_2\ots \OO Y_1)$.
\end{lem}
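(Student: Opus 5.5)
We prove $\OO X\simeq \OO(Y_2\ots \OO Y_1)$ by building mutually inverse homomorphisms from universal properties, and use gauge-invariant uniqueness to get injectivity of one of them. Write $Z=Y_2\ots\OO Y_1=\EE Y_2$. This is an $\OO Y_1$-\corr, whose left action comes from \cite[Theorem~5.1]{functor} via the isomorphism $\theta_{12}$. Let $\psi$ be the universal covariant representation of $X$ in $\OO X$, with $\psi_0:A\to\OO X$ and $\psi_{e_i}$ on $Y_i$. Let $(\pi_Z,t_Z)$ be the universal covariant representation of $Z$ in $\OO Z$.

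\emph{Step 1: a map $\OO Z\to\OO X$.}
\begin{itemize}
\item The pair $(\psi_0,\psi_{e_1})$ is a covariant representation of $Y_1$, so it gives $\rho:\OO Y_1\to \OO X$.
\item Define $t(y\ots T)=\psi_{e_2}(y)\rho(T)$ for $y\in Y_2$ and $T\in\OO Y_1$.
\item Check that $t(\xi)^*t(\eta)=\rho(\<\xi,\eta\>)$. This is immediate from $\psi_{e_2}(y)^*\psi_{e_2}(y')=\psi_0(\<y,y'\>)$.
\item Check that $\rho(S)t(\xi)=t(S\cdot\xi)$. It suffices to take $S=\psi_{e_1}(x)$ and $S=\psi_0(a)$ (and adjoints). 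Here the left action of $t_{Y_1}(x)$ on $y\ots T$ is $\theta_{12}(x\ots y)\ots T$ followed by absorbing the $Y_1$-leg into $\OO Y_1$. The product-system relation $\psi_{e_1}(x)\psi_{e_2}(y)=\psi_{e_2}\psi_{e_1}(\theta_{12}(x\ots y))$ gives exactly this.
\item Check covariance: $\rho=t^{(1)}\circ\varphi_Z$. Since $\varphi_Z(\pi_{Y_1}(a))$ acts as $\varphi_{Y_2}(a)\xt 1$, \lemref{compact}-type computations reduce this to covariance of $\psi_{e_2}$ on the dense subalgebra generated by $\pi_{Y_1}(A)$ and $t_{Y_1}(Y_1)$. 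It then extends by continuity and multiplicativity, using that $\{S:\rho(S)=t^{(1)}(\varphi_Z(S))\}$ is a $C^*$-subalgebra.
\end{itemize}
The universal property then gives $\Phi:\OO Z\to\OO X$.

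\emph{Step 2: a map $\OO X\to\OO Z$.}
\begin{itemize}
\item Set $\sigma_0=\pi_Z\circ\pi_{Y_1}$ and $\sigma_1=\pi_Z\circ t_{Y_1}$. This is a covariant representation of $Y_1$.
\item Define $\sigma_2(y)=t_Z(y\ots 1)$, interpreted via multipliers: use $t_Z(y\ots T)=\sigma_2(y)\pi_Z(T)$ and nondegeneracy, the multiplier bimodule facts from \secref{prelim}. This is a covariant representation of $Y_2$ with coefficient map $\sigma_0$. Covariance follows from covariance of $t_Z$ restricted to $\pi_{Y_1}(A)$.
\item Verify \eqref{intertwine}, namely $\sigma_1\sigma_2=\sigma_2\sigma_1\circ\theta_{12}$. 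This again follows from the description of the left action on $Z$ through $\theta_{12}$.
\end{itemize}
\lemref{replem} then gives a covariant representation of $X$ in $\OO Z$, hence $\Psi:\OO X\to\OO Z$.

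\emph{Step 3: the maps are inverse.} On generators, $\Phi\circ\Psi$ fixes $\psi_0(A)$, $\psi_{e_1}(Y_1)$ and $\psi_{e_2}(Y_2)$. Likewise $\Psi\circ\Phi$ fixes $\pi_Z(\OO Y_1)$ and $t_Z(y\ots T)$. So they are mutually inverse surjections.

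Alternatively, injectivity of $\Psi$ follows from \lemref{gauge-lem}. Let the $\T^2$-action $\beta_{(z_1,z_2)}$ on $\OO Z$ be the gauge action in $z_2$ composed with the automorphism induced, via \propref{comp}, by the correspondence automorphism $(\gamma_{z_1},1\xt\gamma_{z_1})$ of $Z$. This requires that $\gamma_{z_1}$ on $\OO Y_1$ is compatible with the left action, which holds because $\theta_{12}$ preserves degrees. Under $\beta$, $\sigma_0=\pi_Z\circ\pi_{Y_1}$ is injective and has the correct degrees.

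The main obstacle is Step 2: making $\sigma_2(y)=t_Z(y\ots 1)$ rigorous when $\OO Y_1$ need not be unital, and checking \eqref{intertwine} against the precise left action from \cite[Theorem~5.1]{functor}. I would first handle this with multipliers $M(Z)$ and an approximate unit of $\OO Y_1$, and with the identity $\sigma_2(y)\sigma_0(a)=t_Z(y\ots\pi_{Y_1}(a))$.
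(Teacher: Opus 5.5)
Your proof is correct, but your primary route differs from the paper's. Your Step~2 together with your ``alternatively'' paragraph is exactly the paper's proof. The paper defines $\sigma_0=\pi_{\EE Y_2}\circ\pi_{Y_1}$, $\sigma_1=\pi_{\EE Y_2}\circ t_{Y_1}$ and $\sigma_2(x)=t_{\EE Y_2}(x\ots 1)$, made rigorous by factoring $x=ya$ so that $x\ots 1=y\ots\pi_{Y_1}(a)\in\EE Y_2$. It obtains $\Psi:\OO X\to\OO\EE Y_2$ from \lemref{replem}, asserts surjectivity, and proves injectivity with \lemref{gauge-lem}, using the $\T^2$-action $\tau^2_w\tau^1_z$ built from $(\gamma_z,1\otimes\gamma_z)$ and the gauge action of $\EE Y_2$.

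You instead construct the inverse $\Phi$ explicitly from the universal property of $\OO\EE Y_2$, via $t(y\ots T)=\psi_{e_2}(y)\rho(T)$, and check both composites on generators. This buys you several things:
\begin{itemize}
\item you do not need Proposition~\ref{comp};
\item you do not need to check that $(\gamma_z,1\otimes\gamma_z)$ is a correspondence automorphism;
\item you do not need gauge-invariant uniqueness for product systems;
\item you get an explicit formula for the inverse, which the paper later uses implicitly as $\iota\inv$ in Proposition~\ref{Prop general iso}.
\end{itemize}
The price is that you must show $(\rho,t)$ is a covariant representation of $\EE Y_2$ in $\OO X$, which the paper never needs. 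Two points there deserve a line each.

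First, let $\mathcal S$ be the set of $S\in\OO Y_1$ with $\rho(S)t(\xi)=t(S\cdot\xi)$ for all $\xi$. It is clearly a closed subalgebra, but closure under adjoints is not automatic, so your ``(and adjoints)'' needs an argument. For $S\in\mathcal S$, the element $w=\rho(S^*)t(\xi)-t(S^*\cdot\xi)$ satisfies $t(\eta)^*w=0$ for all $\eta$. Hence $\rho(\pi_{Y_1}(u_\lambda))w=t^{(1)}(\varphi_{Y_2}(u_\lambda)\otimes 1)w=0$ for an approximate unit $(u_\lambda)$ of $A$, using the directly verified covariance on $\pi_{Y_1}(A)$. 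Since $\rho(\pi_{Y_1}(u_\lambda))w\to w$ (because $\pi_{Y_1}(A)\subseteq\mathcal S$ and $\EE Y_2$ is left nondegenerate), you get $w=0$.

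Second, covariance on all of $\OO Y_1$ requires $\varphi_{\EE Y_2}(\OO Y_1)\subseteq\KK(\EE Y_2)$. This holds because $\pi_{Y_1}(A)$ contains an approximate unit for $\OO Y_1$ and $\varphi_{\EE Y_2}(\pi_{Y_1}(a))=\varphi_{Y_2}(a)\otimes 1$ is compact. Then $t^{(1)}\bigl(\varphi_{\EE Y_2}(R\,\pi_{Y_1}(a))\bigr)=\rho(R)\rho(\pi_{Y_1}(a))$ for all $R\in\OO Y_1$, which gives covariance on a dense subset and hence everywhere.

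With these two points, your Step~3 check on generators completes the proof.
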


\begin{proof}
Write $\EE Y_2=Y_2\ots \OO Y_1$ as usual.
Our strategy is to first construct injective covariant representations $(\sigma_0,\sigma_1)$ and $(\sigma_0,\sigma_2)$
of $Y_1$ and $Y_2$ on $\OO \EE Y_2$,
and then appeal to the Gauge-Invariant Uniqueness theorem (\lemref{gauge-lem}).
Since $A\subseteq \OO {Y_1}\subseteq \OO\EE Y_2$,  we define
\begin{align*}
&\sigma_0(a)=\pi_{\EE {Y_1}}(\pi_{Y_1}(a)).
\\
&\sigma_1(y)=\pi_{\OO{Y_1}}(t_{Y_1}(y))
\end{align*}

Since we don't know whether  $\OO {Y_2}\subseteq \OO \EE Y_2$, defining $\sigma_2: Y_2\to \OO \EE Y_2$ is more challenging. To overcome this challenge, we use the theory of multiplier bimodules recalled in \secref{prelim} as follows: first notice that we may extend the linear map $t_{\EE Y_2}: Y_2\ots \OO Y_1\to \OO\EE Y_2$ to
$\overline{t_{\EE Y_2}}: M\EE Y_2\to M( \OO\EE Y_2).$
Now, define 
\[
\sigma_2(x)=\overline{t_{\EE Y_2}}(x\ots 1_{ M(\OO Y_1)}).
\]
We simplify the notation by writing this as
\[
\sigma_2(x)=\bar{t_{\EE Y_2}}(x\ots 1).
\]
Notice that by the Hewitt--Cohen factorization theorem we can factor $x=y\cdot a$
with $y\in Y_2,a\in A$. Then we have 
\[
x\ots 1=y\cdot a\ots 1= y\ots  \pi_{Y_1}(a),
\]
and thus $x\ots 1\in Y_2\ots \OO Y_1.$ Therefore, we can write $\bar{t_{\EE Y_2}}(x\ots 1)=t_{\EE Y_2}(x\ots 1).$ This means that  the range  of $\sigma_2$ is contained in $\OO  \EE Y_2$, which allows us to define  $\sigma_2: Y_2\to \OO \EE Y_2$ as $\sigma_2(x)=t_{\EE Y_2}(x\ots 1)$.

We claim that the pairs $(\sigma_0, \sigma_1)$ and $(\sigma_0, \sigma_2)$ are indeed representations. We start by checking 
\[
\sigma_0(\< y, y'\>)=\sigma_1(y)^*\sigma_1(y')
\righttext{for}y\in Y_2,
\]
and similarly with $x\in Y_1$.

For $y, y'\in Y_1$ we have 
\[
\sigma_1(y)^*\sigma_1(y')
= \pi_{\OO Y_1}\left(t_{Y_1}(y)^*t_{Y_1}(y')\right)
=  \pi_{\OO Y_1}\left( \pi_{Y_1}(\<y, y'\>_A)\right)
= \sigma_0 (\<y, y'\>_A).
\]

For $x, x'\in Y_2$ we have 
\[
\sigma_2(x)^*\sigma_2(x')
=\pi_{\OO Y_1}\left(\<x\ots 1, x'\ots 1\>_{\OO Y_1}\right)
= \pi_{\OO Y_1}\left(\pi_{Y_1}(\<x,x'\>_A)\right)
=\sigma_0(\<x, x'\>_A).
\]

We next verify the equalities
\[
\sigma_1(a\cdot y)=\sigma_0(a)\sigma_1(y)
\midtext{and} 
\sigma_2(a\cdot x)=\sigma_0(a)\sigma_2(x)
\]
for $a\in A, y\in Y_1$, and $x\in Y_2$:
\begin{align*}
\begin{split}
\sigma_1(a\cdot y)
&=\pi_{\OO Y_1}\bigl(t_{Y_1}(a\cdot y)\bigr)
\\&=\pi_{\OO Y_1}\bigl( \pi_{Y_1}(a)t_{Y_1}(y)\bigr)
\\&=\pi_{\OO {Y_1}}(\pi_{Y_1}(a))\pi_{\OO {Y_1}}(t_{Y_1}(y))
\\&=\sigma_0(a)\sigma_1(y);
\end{split}
\\
\begin{split}
\sigma_2(a\cdot x)
&=t_{\EE Y_2}(a\cdot x\otimes 1)
\\&=\pi_{\OO Y_1}(\pi_{Y_1}(a))t_{\EE Y_2}(x\ots 1)
\\&=\sigma_0(a)\sigma_2(x),
\end{split}
\end{align*}
which concludes the proof of our claim.

We next show that the representations $(\sigma_0,\sigma_i)$ are covariant.
For $(\sigma_0,\sigma_2)$ we need to verify the equality $\sigma_0(a)={\sigma_2}^{(1)}(\varphi_{Y_2}(a))$, for any $a\in A.$ On one hand we have  
\begin{align*}
\sigma_0(a)=\pi_{\OO Y_1}\left(\pi_{Y_1}(a)\right)
&=t_{\EE Y_2}^{(1)}\bigl(\varphi_{\EE Y_2}(\pi_{Y_1}(a))\bigr)& & \text{(since $(\pi_{\EE Y_2}, t_{\EE Y_2})$ is covariant)}\\
&=t_{\EE Y_2}^{(1)}(\varphi_{Y_2}(a)\otimes 1_{\OO Y_1}).
\end{align*}

 Since $Y_2$ is a regular correspondence we have $\varphi_{Y_2}(a)\in \KK(Y_2)$. Without loss of generality assume $\varphi_{Y_2}(a)=\theta_{x,y}$ for some $x,y\in Y_2.$ Recall that the element  $x\ots 1$ of $Y_2\ots  M(\OO Y_1)$ is in fact an element of $\EE Y_2.$ And thus,  the operator $\theta_{x\ots 1,y\ots 1}$ is in $\KK( \EE Y_2).$ In fact, for any $z\in Y_2, S\in \OO Y_1,$ we have  
\begin{align*}
\theta_{x\ots 1,y\ots 1}(z\ots S)
&= (x\ots 1)\< y\ots 1, z\ots S\>_{\OO Y_1}\\
&=x\ots \pi_{Y_1}(\<y,z\>_A)S\\
&= x\<y,z\>_A\ots S\\
&=\theta_{x,y}\otimes 1_{\OO Y_1}(z\ots S),
\end{align*}
and thus $\theta_{x\ots 1,y\ots 1}=\theta_{x,y}\otimes 1_{\OO Y_1}$ as elements  in $\KK(\EE Y_2).$ Now we have 
\begin{align*}
{\sigma_2}^{(1)}(\varphi_{Y_2}(a))&=\sigma_2(x)\sigma_2(y)^*\\
&= t_{\EE Y_2}(x\ots 1)t_{\EE Y_2}(y\ots 1)^*\\
&={t_{\EE Y_2}}^{(1)}(\theta_{x\ots 1,y\ots 1})\\
&= {t_{\EE Y_2}}^{(1)}(\theta_{x,y}\otimes 1_{\OO Y_1})\\
&= {t_{\EE Y_2}}^{(1)}(\varphi_{Y_2}(a)\otimes 1_{\OO Y_1}),
\end{align*}as desired.  Similarly, one can show the covariance of $(\sigma_0, \sigma_1)$ by using the covariance of $(\pi_{Y_1}, t_{Y_1}).$ 

To show that $\sigma_2\sigma_1=\sigma_1\sigma_2\circ \theta_{21}$,
let  $y\in Y_1, x\in Y_2$. On one hand we have
\[
\sigma_2(x)\sigma_1(y)
=t_{\EE Y_2}(x\ots 1)\pi_{\EE Y_2}(t_{Y_1}(y))
= t_{\EE Y_2}(x\ots t_{Y_1}(y)).
\]
On the other hand, let $\theta_{21}(x\ots y)=\lim_{s\to\infty}\sum_{r=1}^{N_s}x_r^s\ots y_r^s$, where $x_r^s\in Y_1, y_r^s\in Y_2$. Then we have 
\begin{align*}
\sigma_1\sigma_2\circ \theta_{21}(x\ots y)&=\lim_{s\to\infty}\sum_{r=1}^{N_s}\sigma_1(x_r^s)\sigma_2(y_r^s)\\
&=\lim_{s\to\infty}\sum_{r=1}^{N_s}\pi_{\OO{Y_1}}(t_{Y_1}(x_r^s))t_{\EE Y_2}(y_r^s\ots 1)\\
&=\lim_{s\to\infty}\sum_{r=1}^{N_s} t_{\EE Y_2}\bigl( t_{Y_1}(x_r^s)\cdot (y_r^s\ots 1)\bigr)\\
&= \lim_{s\to\infty}\sum_{r=1}^{N_s}t_{\EE Y_2}\bigl((1_{Y_2}\otimes V)U_{12}(x_r^s\ots y_r^s)\ots 1\bigr) \\
&= \sum_{r=1}^{N_s}t_{\EE Y_2}\bigl((1_{Y_2}\otimes V)(x\ots\xi\ots 1)\bigr)\\
&=t_{\EE Y_2}(x\ots t_{Y_1}(\xi)).
\end{align*}
Therefore, by \lemref{replem} there is a \rep\ $\psi$ of $X$ in $B$ such that $\psi_{e_i}=\sigma_i$ for $i=1,2$, and $\psi_0=\sigma_0$,
and moreover $\psi$ is essentially unique and \cp\ covariant.

Now that we have a covariant representation $\sigma:X\to \OO \EE Y_2$, in order to prove the \iso\ $\OO X\simeq \OO \EE Y_2$, our last step is to use 
the Gauge-Invariant Uniqueness theorem
(\lemref{gauge-lem}):
first note that the induced \hm\ $\sigma_*:\OO X\to \OO\EE Y_2$ is surjective,
and $\pi_0:A\to \OO\EE Y_2$ is injective.
To finish, we will construct an action $\rho$ of $\T^2$ on $\OO\EE Y_2$
that is compatible with the gauge action on $\OO X$
and as usual we will do our work using the \gensys\ $Y$.

We first claim
that
for any $z\in\T$ the pair $(\gamma_z, 1_{Y_2}\otimes \gamma_z)$ defines a \cst-\corr\ \auto\ of $\pre {\OO Y_1}(\EE Y_2)_{\OO Y_1}$.
To verify this,
we first prove the equality
\[ (1_{Y_2}\otimes \gamma_z)[T\cdot \xi]=\gamma_z(T)\cdot (1_{Y_2}\otimes \gamma_z)(\xi),\]
for any $T\in\OO Y_1, \xi\in \EE Y_2.$ Let $y_1\in Y_1$ and $a\in A.$ It suffices to let $T=t_{Y_1}(y_1)$ and $T=\pi_{Y_1}(a)$, as such elements generate $\OO Y_1.$ Let $y_2\in Y_2$ and $S_n\in (\OO Y_1)^n$. Assume that $U_{1,2}(y_1\ots y_2)=\lim_{n\rightarrow\infty}\sum_{i=1}^{N_n}y_{2,i}^n\ots y_{1,i}^n$, where each $y_{1,i}^n\in Y_1$, and $y_{2,i}^n\in Y_2$. Then we have 
\begin{align*}
\gamma_z(t_{Y_1}(y_1))\cdot (1_{Y_2}\otimes \gamma_z)(y_2\ots S_n)&=z\varphi_{\EE Y_2}(t_{Y_1}(y_1))(y_2\ots z^n S_n)\\
&=z\lim_{n\rightarrow\infty}\sum_{i=1}^{N_n}y_{2,i}^n\ots t_{Y_1}(y_{1,i}^n)z^nS_n \\
&=z^{n+1}\lim_{n\rightarrow\infty}\sum_{i=1}^{N_n}y_{2,i}^n\ots t_{Y_1}(y_{1,i}^n)S_n \\
&=(1_{Y_2}\otimes \gamma_z)\left[t_{Y_1}(y_1)\cdot (y_2\ots S_n)\right],
\end{align*}
and simiarly
\begin{align*}
\gamma_z(\pi_{Y_1}(a))\cdot (1_{Y_2}\otimes\gamma_z)(y_2\ots S_n)&=\pi_{Y_1}(a)\cdot (y_2\ots z^nS_n)\\
&=ay_2\ots z^nS_n\\
&=(1_{Y_2}\otimes \gamma_z)(ay_2\ots S_n)\\
&=(1_{Y_2}\ots\gamma_z)[\pi_{Y_1}(a)\cdot(y_2\ots S_n).]
\end{align*}
We next show the equality,
\[
\<(1_{Y_2}\otimes \gamma_z)(y_2\ots S_n), (1_{Y_2}\otimes \gamma_z)(y_2'\ots S_m)\>=\gamma_z\left(\<y_2\ots S_n, y_2'\ots S_m\>_{\OO Y_1}\right),
\]
for $y_2, y_2'\in Y_2,$ and $S_n\in (\OO Y_1)^n, S_m\in (\OO Y_1)^m$: 
\begin{align*}
&\<(1_{Y_2}\otimes \gamma_z)(y_2\ots S_n),
(1_{Y_2}\otimes \gamma_z)(y_2'\ots S_m)\>_{\OO Y_1}\\
&\qquad=\<y_2\ots z^nS_n, y_2'\ots z^m S_m\>_{\OO Y_1}\\
&\qquad=\<z^nS_n, \<y_2,y_2'\>_A\cdot z^mS_m\>_{\OO Y_1}\\
&\qquad=z^{m-n}S_n^*\pi_{Y_1}(\<y_2,y_2'\>_A)S_m\\
&\qquad=\gamma_z\left(S_n^*\pi_{Y_1}(\<y_2,y_2'\>_A)S_m\right)\\
&\qquad=\gamma_z\left(\<y_2\ots S_n, y_2'\ots S_m\>_{\OO Y_1}\right).
\end{align*}
since surjectivity is straight forward,
we have proven the claim.

We will construct our desired $\T^2$-action $\rho$ on $\OO\EE Y_2$
from commuting actions $\tau^i$ of $\T$.
First, for $z\in \T$ let $\tau_z^1$ denote the automorphism on  $\OO\EE Y_2$ induced from the \cst-correspondence \auto\ $(\gamma_z, 1_{Y_2}\otimes\gamma_z)$ of $\EE Y_2$. Then by Proposition~\ref{comp} we have  
\begin{alignat}{2}
\tau_z^1\circ\pi_{\EE Y_2}&=\pi_{\EE Y_2}\circ\gamma_z\\
\tau_z^1\circ t_{\EE Y_2}&=t_{\EE Y_2}\circ (1_{Y_2}\otimes \gamma_z).
\end{alignat}

Next, consider the map $\beta_z: \EE Y_2\rightarrow \EE Y_2$ defined by $\beta_z(y_2\ots S)=z(y_2\ots S).$ It is straightforward do verify that  the pair $(1_{\OO Y_1},\beta_z) $ defines a \cst-correspondence \auto\ of $\EE Y_2$,  which gives rise to an automorphism $\tau_z^2: \OO\EE Y_2\rightarrow  \OO\EE Y_2$ such that

\begin{alignat}{2}
\tau_z^2\circ\pi_{\EE Y_2}&=\pi_{\EE Y_2}\\
\tau_z^2\circ t_{\EE Y_2}&=t_{\EE Y_2}\circ \beta_z.
\end{alignat}

Now for any $(z,w)\in\T^2$ define an automorphism $\rho_{(z,w)}$ on $\OO\EE Y_2$ by $\rho_{(z,w)}=\tau_w^2\tau_z^1.$ To check the compatibility,
we need
\[
\rho_{(z,w)}\sigma_1(y_1)=z\sigma_1(y_1) \text{ and }
\rho_{(z,w)}\sigma_2(y_2)=w\sigma_2(y_2).
\]

For the first equality:
\begin{align*}
\rho_{(z,w)}\sigma_1(y_1)&=\tau_w^2\tau_z^1\pi_{\EE Y_2}(t_{Y_1}(y_1)) \\
&=\tau_w^2\pi_{\EE Y_2}[z t_{Y_1}(y_1)] & \text{by (0.1)}\\
&=zt_{Y_1}(y_1)=z\sigma_1(y_1) & \text{by (0.3).}
\end{align*}

For the second equality:
\begin{align*}
\rho_{(z,w)}\sigma_2(y_2)&=\tau_w^2\tau_z^1t_{\EE Y_2}(y_2\ots 1)\\
&=\tau_w^2t_{\EE Y_2}(y_2\ots 1) & \text{by (0.2)}\\
&=t_{\EE Y_2}\circ\beta_z(y_2\ots 1) & \text{by (0.4)}\\
&=w\sigma_2(y_2).
\end{align*}
It is clear that the actions $\tau^1$ and $\tau^2$ commute, so $\rho$ gives an action of $\T^2$ on $\OO\EE Y_2$,
and it now follows from the above that $\rho$ is compatible with $\sigma$ and the gauge action on $\OO X$.
\end{proof}

We aim to extend \lemref{firstiso} to a \ps\ consisting of $n$ \corr s for $n\geq 3$. To that we must first 
define a suitable left $\OO(\ast_1^{n-1}Y_j)$-module \hm\
on 
the Hilbert $\OO(\ast_1^{n-1}Y_j)$-module $Y_n\ots \OO(\ast_1^{n-1}Y_j)$.
We need the following remark to accomplish this. 

\begin{rem}\label{universal}
Let $\psi:X\to \OO X$ be the universal covariant representation of a \ps\ $X$
with \gensys\ $(Y,\theta)$.
We can use this to get a \gensys\ $(\sigma,Y,\theta)$ in $\GG^{\OO X}$.
This implies , for $x\in Y_i, y\in Y_j$,  if
\[
\theta_{ij}(x\ots y)=\lim_{s\to\infty}\sum_{r=1}^{N_s} (y_r^s\ots x_r^s), \hspace{1cm} y_r^s\in Y_j, x_r^s\in Y_s
\]
then
\[
\sigma_i(x)\sigma_j(y)
=\lim_{s\to\infty}\sum_{r=1}^{N_s} \sigma_j(y_r^s)\sigma_i(x_r^s).
\]
\end{rem}

\begin{lem}\label{three}
Let $(Y,\theta)$ be a \gensys\ of $n$ $A$-\corr s. Let \mbox{$1<m\leq n$.} Then, for any $k=1,\dots,n$, the Hilbert module $Y_k\ots \OO(\ast_{i\leq m}Y_i)$ is a $C^*$-\corr\ over $\OO(\ast_{i\leq m}Y_i)$.
\end{lem}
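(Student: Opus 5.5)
Write $B=\OO(\ast_{i\le m}Y_i)$, the \cpa\ of the \ps\ over $\N^m$ generated by $Y_1,\dots,Y_m$, with universal covariant \rep\ $\psi$. Let $\psi_0:A\to B$ and $\sigma_i:=\psi_{e_i}:Y_i\to B$. The plan is as follows.

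First, regard $Y_k\ots B$ as the right Hilbert $B$-module obtained by balancing over $A$ via $\psi_0$, with the obvious inner product. The task is to construct a left action $\varphi:B\to\LL(Y_k\ots B)$, and we will get it from the universal property of $B$.

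By \lemref{replem}, it suffices to produce a covariant \rep\ $(\pi,\{\tau_i\}_{i\le m})$ of the \gensys\ $(Y_i,\theta_{ij})_{i,j\le m}$ in the \csta\ $\LL(Y_k\ots B)$ satisfying \eqref{intertwine}. Set
\[
\pi(a)=\varphi_{Y_k}(a)\xt 1_B .
\]
For $i\le m$ define $\tau_i(x)$ by transporting through $\theta_{ki}$:
\[
\tau_i(x)(y\ots T)=(1_{Y_k}\xt\mu_i)\bigl(\theta_{ik}(x\ots y)\ots T\bigr),
\]
where $\mu_i:Y_i\ots B\to B$, $x'\ots T\mapsto\sigma_i(x')T$. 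This is the same recipe as the operators $t_{Y_1}(y)\cdot$ on $\EE Y_2$ in \lemref{firstiso}. More conceptually, it is the composite
\[
Y_i\ots Y_k\ots B\xrightarrow{\theta_{ik}\xt1}Y_k\ots Y_i\ots B\xrightarrow{1\xt\mu_i}Y_k\ots B .
\]
Since $\sigma_i$ is isometric, $\mu_i$ is an isometric right-module map. Hence $\tau_i(x)$ is adjointable, with adjoint $y\ots T\mapsto$ the corresponding creation adjoint. Checking the \rep\ axioms for $(\pi,\tau_i)$ is then routine: $\tau_i(x)^*\tau_i(x')=\pi(\<x,x'\>)$ follows from unitarity of $\theta_{ik}$ and $\sigma_i(x_1)^*\sigma_i(x_2)=\psi_0(\<x_1,x_2\>)$, and left $A$-linearity from that of $\theta_{ik}$. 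Covariance of $(\pi,\tau_i)$ follows by the computation of \lemref{subcat}/\lemref{compact}, using covariance of $\sigma_i$ (so $\sigma_i^{(1)}\varphi_{Y_i}=\psi_0$) and writing $\varphi_{Y_i}(a)$ as a limit of finite-rank operators.

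The main obstacle is verifying \eqref{intertwine}, i.e.\ $\tau_i\tau_j=\tau_j\tau_i\circ\theta_{ij}$ for $i,j\le m$. Unwinding, $\tau_i\tau_j$ on $Y_i\ots Y_j\ots Y_k\ots B$ is $(1\xt\mu_{ij})\circ(\theta_{ik}\text{-move})\circ(\theta_{jk}\text{-move})$, where $\mu_{ij}(x\ots x'\ots T)=\sigma_i(x)\sigma_j(x')T$. I would apply the hexagonal relation for $(i,j,k)$ to rewrite the $Y_k$-moves after first applying $\theta_{ij}$. Then \remref{universal}, i.e.\ $\sigma_i\sigma_j=\sigma_j\sigma_i\circ\theta_{ij}$ in $B$, absorbs the remaining $\theta_{ij}$ into $\mu$. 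The care needed is in bookkeeping with limits of sums of elementary tensors, as in \lemref{firstiso}.

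Given this, \lemref{replem} yields a covariant \rep\ $\Psi$ of $\ast_{i\le m}Y_i$ in $\LL(Y_k\ots B)$, hence a \hm\ $\varphi=\Psi_*:B\to\LL(Y_k\ots B)$. Nondegeneracy follows since $\pi(A)$ acts nondegenerately, because $Y_k$ is nondegenerate. This makes $Y_k\ots B$ a $B$-\corr. (Regularity, if desired, would follow from the same factorization argument as in \lemref{firstiso}, but the lemma only asks for the \corr\ structure.)
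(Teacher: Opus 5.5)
Your proposal follows the paper's proof essentially step for step: you use the same operators $\tau_i(x)z=(1\xt\mu_i)(\theta_{ik}\xt 1)(x\ots z)$ (the paper's $\Phi_i$) and the same reduction to \lemref{replem}, you verify $\tau_i\tau_j=\tau_j\tau_i\circ\theta_{ij}$ the same way via the hexagonal relation together with $\sigma_i\sigma_j=\sigma_j\sigma_i\circ\theta_{ij}$, and you are more careful than the paper in working over $\OO(\ast_{i\le m}Y_i)$ and actually checking the representation, covariance and nondegeneracy conditions. One small repair: adjointability of $\tau_i(x)$ does not follow from $\mu_i$ merely being isometric; you need $\mu_i$ to be unitary, which holds because $\psi_0(A)B$ is dense in $B$ and covariance gives $\psi_0(A)\subseteq\overline{\text{span}}\,\sigma_i(Y_i)\sigma_i(Y_i)^*$, so $\mu_i$ has dense range.
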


\begin{proof}Denote the \ps\  $\ast_{i\leq n}Y_i$ by $X$ and let $\psi$ be the universal covariant representation of $X$. Then as in \remref{universal} we get a generating system $(\sigma,Y,\theta)$ in $\GG^{\OO X}$,
where
\begin{align*}
Y_i&=X_{e_i}\righttext{and}
\sigma_i=\psi_{e_i}.
\end{align*}
For 
$i=1,...,m$
consider the natural absorption \iso s
\[
\V_i: Y_i\ots  \OO X\to \OO X\midtext{determined by}x_i\ots T \mapsto \sigma_{i}(x)T
\]
where  $x\in Y_i$ and $T\in \OO X.$ 
Then
for the left multiplication, define
\[
\Phi_i: Y_i\to \KK(Y_k\ots\OO X)
\]
by
\[
\Phi_i(x)z=(1\xt \V_i)(\theta_{ik}\xt 1_{\OO X})(x\ots z)\righttext{for}z\in Y_k\ots\OO X.
\]
We aim to use this to obtain a covariant representation of $X$ by
appealing to \lemref{replem}.
Notice that since $(\sigma,Y,\theta)$ is a \gensys,
we have
\[
\sigma_i\sigma_j=\sigma_j\sigma_i\circ \theta_{ij}\righttext{for all}i,j\leq n.
\]
To apply \lemref{replem}, we need to show
\begin{equation}\label{Phi}
\Phi_i\Phi_j=\Phi_j\Phi_i\circ \theta_{ij}\righttext{for all}i,j\leq m.
\end{equation}
We first claim that
\begin{equation}\label{Veq}
\V_i(1\xt \V_j)(\theta_{ij}\xt 1)=\V_j(1\xt V_i)\righttext{for all}i,j\leq n.
\end{equation}
Let $x\in Y_i,y\in Y_j$,
and write
\[
\theta_{ij}(x\ots y)=\lim_{s\to\infty}\sum_{r=1}^{N_s}(y_r^s\ots x_r^s).
\]
For $z\in \OO X$ the left-hand side of \eqref{Veq} applied to the elementary tensor $x\ots y\ots z$ is
\begin{align*}
&\lim_s\sum_r \V_i(1\xt \V_j)(\theta_{ij}\xt 1)(x\ots y\ots z)
\\&\quad=\lim_s\sum_r \V_j(1\xt \V_i)(y_r^s\ots x_r^s\ots z)
\\&\quad=\lim_s\sum_r \V_j\bigl(y_r^s\ots \sigma_i(x_r^s)z\bigr)
\\&\quad=\lim_s\sum_r \sigma_j(y_r^s)\sigma_i(x_r^s)z,
\end{align*}
while the right-hand side is
\begin{align*}
\V_j(1\xt \V_i)(x\ots y\ots z)
&=\sigma_i(x)\sigma_j(y)z
\\&=\sigma_i\sigma_j(x\ots y)z
\\&=\sigma_j\sigma_i\circ \theta_{ij}(x\ots y)z
\\&=\lim_s\sum_r \sigma_j\sigma_i(y_r^s\ots x_r^s)z
\\&=\lim_s\sum_r \sigma_j(y_r^s)\sigma_i(x_i^s)z,
\end{align*}
proving the claim.

Turning to \eqref{Phi}, for $x\in Y_i,y\in Y_j,z\in (Y_k\ots \OO X)$ we have
\begin{align*}
&\Phi_i\Phi_j(x\ots y)z
\\&\quad=\Phi_i(x)\Phi_j(y)z
\\&\quad=(1\xt \V_i)\bigl(\theta_{ik}\xt 1)(x\xt \Phi_j(y)z\bigr)
\\&\quad=(1\xt \V_i)\bigl(x\ots (1\xt \V_j)(\theta_{jk}\xt 1)(y\xt z)\bigr)
\\&\quad=(1\xt \V_i)(1\xt 1\xt \V_j)(\theta_{ik}\xt 1\xt 1)(1\xt \theta_{jk}\xt 1)(x\ots y\ots z)
\\&\quad\overset{*}=(1\xt \V_j)(1\xt 1\xt \V_i)(1\xt \theta_{ij}\xt 1)
\\&\quad\hspace{1in}(\theta_{ik}\xt 1\xt 1)(1\xt \theta_{jk}\xt 1)(x\ots y\ots z)
\\&\quad\overset{**}=(1\xt \V_j)(1\xt 1\xt \V_i)(\theta_{jk}\xt 1\xt 1)
\\&\quad\hspace{1in}(1\xt \theta_{ik}\xt 1)(\theta_{ij}\xt 1\xt 1)(x\ots y\ots z)
\\&\quad=\lim_s\sum_r (1\xt V_j)(\theta_{jk}\xt 1)(1\xt 1\xt V_i)(1\xt \theta_{ik}\xt 1)(y_r^s\ots x_r^s\ots z)
\\&\quad=\lim_s\sum_r \Phi_j\Phi_i(y_s^r\ots x_s^r)z
\\&\quad=\Phi_j\Phi_i\circ \theta_{ij}(x\ots y)z,
\end{align*}
where the equality at (*) follows from the claim and the one at (**) follows from the hexagonal relations.
\end{proof}

\begin{cor}\label{maincor}
If $(Y,\theta)$ is a \gensys\ of $n$ $A$-\corr s, then
\[
\OO(\ast_{i\le n}Y_i)\simeq \OO\bigl(Y_n\ots \OO(\ast_{i<n}Y_i)\bigr).
\]
\end{cor}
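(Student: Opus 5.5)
Set $B:=\OO(\ast_{i<n}Y_i)$ and $Z:=Y_n\ots B$. By \lemref{three}, applied with $m=n-1$ and $k=n$, $Z$ is a $B$-\corr. We will prove the corollary by following the proof of \lemref{firstiso} step by step. In that proof, the role of $\OO Y_1$ is now played by $B$, the single generator $Y_1$ is replaced by the family $Y_1,\dots,Y_{n-1}$, and the role of $Y_2$ is played by $Y_n$. Write $\psi'$ for the universal covariant representation of $\ast_{i<n}Y_i$ in $B$.

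\textbf{Step 1: representations into $\OO Z$.} We define maps into $\OO Z$ by
\[
\sigma_0=\pi_Z\circ\psi'_0,\qquad \sigma_i=\pi_Z\circ\psi'_{e_i}\ (i<n),\qquad \sigma_n(x)=t_Z(x\ots 1)\ (x\in Y_n).
\]
As in \lemref{firstiso}, $x\ots 1$ is interpreted through $M(B)$ and lies in $Z$ by Hewitt--Cohen factorization. The arguments of \lemref{firstiso} transfer verbatim to give:
\begin{enumerate}[label=\textup{(\alph*)}]
\item each $(\sigma_0,\sigma_i)$ is a \rep;
\item $\sigma_n$ is covariant, via $\theta_{x\ots1,y\ots1}=\theta_{x,y}\xt 1_B$ and the covariance of $(\pi_Z,t_Z)$;
\item for $i<n$, $\sigma_i$ is covariant, because $\psi'$ is covariant and $\pi_Z$ is a \hm.
\end{enumerate}
The commutation relations $\sigma_i\sigma_j=\sigma_j\sigma_i\circ\theta_{ij}$ for $i,j<n$ follow from \lemref{replem}, since $\psi'$ is a \rep\ of the product system. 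So the only new relations are those between $\sigma_n$ and $\sigma_i$ for $i<n$. For these we use the left action $\Phi_i$ built in \lemref{three}: $\psi'_{e_i}(y)$ acts on $x\ots1$ via $(1\xt\V_i)(\theta_{in}\xt1)$. Then
\[
\sigma_i(y)\sigma_n(x)=t_Z\bigl(\psi'_{e_i}(y)\cdot(x\ots1)\bigr)=\sigma_n\sigma_i\circ\theta_{in}(y\ots x),
\]
exactly as in the last computation before the gauge argument in \lemref{firstiso}. By \lemref{replem} we then obtain a covariant \rep\ $\sigma$ of $\ast_{i\le n}Y_i$ in $\OO Z$.

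\textbf{Step 2: surjectivity and injectivity on $A$.} The images of the $\sigma_i$ generate $\pi_Z(B)$ together with $t_Z(Y_n\ots 1)$. Since $t_Z(x\ots S)=t_Z(x\ots1)\pi_Z(S)$, these images generate $\OO Z$, so the induced \hm\ $\sigma_*$ is surjective. Moreover, $\sigma_0$ is injective because $\psi'_0$ and $\pi_Z$ are injective.

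\textbf{Step 3: gauge action and conclusion.} We build a $\T^n$-action on $\OO Z$.
\begin{itemize}
\item For $z\in\T^{n-1}$, let $\gamma'_z$ be the gauge action on $B$. We show that $(\gamma'_z,1_{Y_n}\xt\gamma'_z)$ is a \cst-\corr\ \auto\ of $Z$, repeating the graded computation of \lemref{firstiso} with $\N^{n-1}$-degrees in place of $\N$-degrees. \propref{comp} then yields automorphisms $\tau^1_z$ of $\OO Z$.
\item For $w\in\T$, the pair $(1_B,w\cdot)$ gives $\tau^2_w$.
\end{itemize}
Setting $\rho_{(z,w)}=\tau^2_w\tau^1_z$ gives commuting actions satisfying $\rho_{(z,w)}\sigma_i=z_i\sigma_i$ for $i<n$ and $\rho_{(z,w)}\sigma_n=w\sigma_n$. \lemref{gauge-lem} then shows $\sigma_*$ is injective, hence an \iso.

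\textbf{Main obstacle.} The main obstacle is the mixed commutation relation in Step 1. It requires identifying the left action of $\psi'_{e_i}(y)$ on $Z$ with $\Phi_i$ from \lemref{three}, and that identification depends on checking that the universal \rep\ of $\ast_{i<n}Y_i$ (rather than of the full system) yields the same $\V_i$. I would first verify this on elementary tensors using \eqref{Veq}, and then extend by continuity.
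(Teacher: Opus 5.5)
Your proposal is correct and follows essentially the same route as the paper. The paper also maps $A$, each $Y_i$ ($i<n$) and $Y_n$ into the Cuntz--Pimsner algebra of $Y_n\ots\OO(\ast_{i<n}Y_i)$ via $\pi_Z\circ\psi'_0$, $\pi_Z\circ\psi'_{e_i}$ and $x\mapsto t_Z(x\ots 1)$. It obtains the intertwining relations from the generating system of $\ast_{i<n}Y_i$ and from the left action built in \lemref{three}, invokes \lemref{replem}, and finishes with \lemref{gauge-lem}, using a $\T^n$-action assembled from commuting circle actions as in \lemref{firstiso}.

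The differences are cosmetic. You use the whole $\T^{n-1}$ gauge action on $B$ at once rather than $n-1$ coordinate actions, and you spell out surjectivity and the mixed relation more explicitly. The ``obstacle'' you flag is actually definitional: the left action of $B$ on $Z$ is, by construction, the one induced from the $\Phi_i$.
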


\begin{proof} 
Let $X=\ast_{i<n}Y_i$,
with universal covariant representation $\psi$,
and let
\[
\omega_i=\psi_{e_i}:Y_i\to \OO X\righttext{for}i<n.
\]
Further let $\DD Y_n=Y_n\ots \OO X$ 
with universal covariant representation $(\Upsilon,T)=(\pi_{\DD Y_n},t_{\DD Y_n})$.
Our strategy is to get a covariant \rep\ 
of $\ast_{i\le n}Y_i$ in $\OO \DD Y_n$,
and we aim to apply \lemref{replem}.
So, we need covariant \rep s
\[
(\sigma_0,\sigma_i):Y_i\to \OO\DD Y_n\righttext{for}i=1,\dots,n.
\]
Since $A\subseteq \OO X \subseteq \OO \DD Y_n$, 
we can define
\[
\sigma_0=\Upsilon\circ \pi_X:A\to \OO \DD Y_n.
\]
We represent the $Y_i$'s differently for $i<n$ and $i=n$:
\[
\sigma_i(x)=
\begin{cases}
\Upsilon\circ \omega_i(x)\case i<n\\
T(x\ots 1)\case i=n.
\end{cases}
\]
Very similarly to \lemref{three}, we can prove that for $i\le n$, $x,y\in Y_i$, and  $a\in A$ we have
\[
\sigma_0(\< x,y\>_A)=\sigma_i(x)^*\sigma_i(y) \midtext{and} 
\sigma_i(a\cdot x)=\pi_0(a)\sigma_i(x).
\] 

Since $(Y,\theta)$ is a \gensys\ in the monoidal category $\GG$ of $A$-\corr s,
to get a \gensys\ in $\GG^{\OO \DD Y_n}_c$ we only
need to check that
\[
\sigma_i\sigma_j=\sigma_j\sigma_i\circ \theta_{ij}\righttext{for all}i,j\le n.
\]
The equality when $i$ or $j=n$ follows exactly as in the proof of \lemref{three}.
For $i,j<n$ we have a \gensys\ $(\omega,Z,\theta)$ in $\GG^{\OO X}_e$
with $Z=\{Y_i\}_{i<n}$.
Since $\Upsilon:\OO X\to \OO \DD Y_n$ is a \cst-\hm, the composition
$(\Upsilon\circ \omega,Z,\theta)$
is a \gensys\ in $\GG^{\OO \DD Y_n}_c$.
Re-inserting $Y_n$, we now have a \gensys\ $(\sigma,Y,\theta)$ in $\GG^{\OO\DD Y_n}_c$, as desired.

It remains to show that the Gauge-Invariant Uniqueness theorem (\lemref{gauge-lem}) is applicable.
This can be accomplished by routinely adjusting the argument for \lemref{firstiso}.
First note that we have a covariant \rep\ $\sigma:Y_1*\cdots*Y_n\to \OO\DD Y_n$,
and the induced \hm\ $\sigma_*:\OO (Y_1*\cdots*Y_n)\to \OO\DD Y_n$
is surjective and $\sigma_0:A\to \OO\DD Y_n$ is injective.
We construct an action $\rho$ of $\T^n$ on $\OO\DD Y_n$
from commuting actions $\tau^i$ of $\T$ for $i=1,\dots,n$.
For $i<n$ we define $\tau^i$ using the same idea as in \lemref{firstiso}
(with the coordinate gauge action $\gamma^i$ of $\T$ associated with $Y_i$),
and similarly for $i=n$
(using the action $\beta$ associated with $Y_n$).
It is again clear that $\tau_i$ for $i=1,\dots,n$ commute,
and hence give an action $\rho$ of $\T^n$,
which is compatible with $\sigma$ and the gauge action on $\OO(Y_1*\cdots*Y_n)$.
\end{proof}

\begin{prop}\label{Prop general iso}
Let $(Y, \theta)$ be a generating system of $n$ $A$-correspondences and let \mbox{$1<k\leq n$.} For any $j=1,\dots,n$, the $\OO(Y_k\ots \OO(\ast_{i<k}Y_i))$-correspondence
\[
Y_j\ots \OO(\ast_{i<k}Y_i)\otimes_{\OO(\ast_{i<k}Y_i)}\OO(Y_k\ots \OO(\ast_{i<k}Y_i))
\]
and the $\OO(\ast_{i\le k}Y_i)$-\corr\ $Y_j\ots \OO(\ast_{i< k}Y_i)$ 
are isomorphic. 
\end{prop}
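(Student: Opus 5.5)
Write $B=\OO(\ast_{i<k}Y_i)$, $D=\DD Y_k=Y_k\ots B$ and $C=\OO D$. By \corref{maincor}, applied to the generating system $\{Y_1,\dots,Y_k\}$, there is an isomorphism $\Psi:\OO(\ast_{i\le k}Y_i)\variso C$. It is built from the covariant representation $\sigma$ of that proof: $\sigma_0=\Upsilon\circ\pi_X$, with $\sigma_i=\Upsilon\circ\omega_i$ for $i<k$ and $\sigma_k(x)=T(x\ots 1)$. We regard $Y_j\ots \OO(\ast_{i\le k}Y_i)$ as a $C$-\corr\ by transporting its structure along $\Psi$; this is the \corr\ structure supplied by \lemref{three}. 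The claim then reduces to producing a $C$-\corr\ \iso
\[
\Omega:(Y_j\ots B)\otimes_B C\variso Y_j\ots C .
\]
Here $C$ is a right $B$-module via $\Upsilon$, and the left $C$-action on $Y_j\ots C$ is given by the maps $\Phi_i$ of \lemref{three}. These are built from $\theta_{ij}$ and the absorption maps $\V_i$, now with $\OO X$ replaced by $C$.

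\textbf{Step 1.} First define $\Omega$ on elementary tensors by
\[
\Omega\bigl((y\ots b)\otimes_B c\bigr)=y\ots \Upsilon(b)c .
\]
Both sides are just $Y_j$ tensored with $C$, so this is the associativity of balanced tensor products:
\[
(Y_j\ots B)\otimes_B C\simeq Y_j\ots(B\otimes_B C)\simeq Y_j\ots C.
\]
The last step uses nondegeneracy of $\Upsilon$ on $C$, i.e. $\Upsilon(B)C=C$, which holds because $\Upsilon(B)$ contains $\sigma_0(A)$, and $\sigma_0(A)$ acts nondegenerately. Inner products are checked directly:
\[
\<y\ots\Upsilon(b)c,\;y'\ots\Upsilon(b')c'\>=c^*\Upsilon\bigl(b^*\pi_X(\<y,y'\>)b'\bigr)c' .
\]
So $\Omega$ is a right $C$-module unitary, with dense range.

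\textbf{Step 2.} It remains to show that $\Omega$ intertwines the left $C$-actions. Since $C$ is generated by $\Upsilon(B)$ together with $T(D)$, it suffices to check the generators $\sigma_0(a)$, $\sigma_i(x)$ for $i<k$, and $T(x\ots b)=\sigma_k(x)\Upsilon(b)$. By the left $B$-linearity of $\Omega$, the last reduces to checking $\sigma_k(x)$.

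On the left-hand side, elements of $\Upsilon(B)$ act through $\varphi_{Y_j\ots B}$, which is $\Phi_i$ at the level of $B$, tensored with $1$. Since $\Upsilon$ is a homomorphism, this matches $\Phi_i$ at the level of $C$ under $\Omega$.

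The generator $\sigma_k(x)$ acts on the left-hand side through the Cuntz--Pimsner left action. This uses the left $D$-structure of $(Y_j\ots B)\otimes_B C$, i.e. the functor $\EE$ applied to the intertwiner $Y_j\ots B$ between $D$ and itself. That in turn comes from the isomorphism $\nu$ of \cite[Theorem~5.1]{functor} and $\theta_{kj}$. On the right-hand side, $\sigma_k(x)$ acts by
\[
\Phi_k(x)=(1\xt\V_k)(\theta_{kj}\xt1).
\]
Writing $\theta_{kj}(x\ots y)=\lim_s\sum_r y_r^s\ots x_r^s$, both sides send $(y\ots b)\otimes c$ to
\[
\lim_s\sum_r y_r^s\ots \sigma_k(x_r^s)\Upsilon(b)c .
\]

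\textbf{Main obstacle.} The main obstacle is Step 2 for $\sigma_k$: unwinding the left action of $D$ on $(Y_j\ots B)\otimes_B C$ produced by the construction of \cite{functor}. This must be done carefully with limits of finite sums and with the multiplier extension $x\ots 1$, as in \lemref{firstiso}. I would first verify it for $b$ ranging over a spanning set $t(\xi)t(\eta)^*$ of $B$, and then extend by continuity.

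Finally, compatibility with $\Psi$ is immediate: $\Psi$ was defined to send the generators to exactly the $\sigma_i$ used above, so the two $C$-structures agree.
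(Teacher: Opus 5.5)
Your proposal is correct and follows essentially the same route as the paper. Your $\Omega$ followed by $1\otimes\Psi^{-1}$ is exactly the paper's map $(x\ots S)\otimes_B T\mapsto x\ots\iota^{-1}(ST)$, and you check the left actions on the same generators: $\sigma_0(a)$, $\sigma_i(x)$ for $i<k$ via the $\theta_{ij}$/absorption description of \lemref{three}, and $\sigma_k(x)$ via the $\nu$/$\theta_{kj}$ intertwiner. The only difference is that you argue directly at level $k$ over $B=\OO(\ast_{i<k}Y_i)$, whereas the paper writes out $k=2$ and then iterates; also, a small slip: $C$ is a \emph{left} (not right) $B$-module via $\Upsilon$ in $(Y_j\ots B)\otimes_B C$.
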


\begin{proof}
By Corollary~\ref{maincor} there exists an isomorphism
\[
\iota:\OO(\ast_{i\le k}Y_i)\variso \OO(Y_k\ots \OO(\ast_{i<k}Y_i)).
\]
Indeed, letting $\psi: \ast_{i\le k}Y_i\to \OO(\ast_{i\le k}Y_i)$ be the universal covariant representation, it is straightforward to see that $\iota\circ \psi_{e_i}=\sigma_i$ for $i\in\{1,...,k\},$ where each $\sigma_i$ is defined as in Corollary~\ref{maincor}. We will first prove the case $k=2$: Let $\phi:\EE Y_j \otso \OO\EE Y_2 \to  Y_j\ots \OO (Y_1*Y_2)$ be the linear map defined on elementary tensors by 
\[
(x\ots S) \otso T \mapsto x\ots \iota\inv(ST),
\]
for $x\in Y_j, S\in \OO Y_1,$ and $T\in \EE Y_2.$ Then we claim that the pair $(\iota\inv,\phi)$ defines a $C^*$-\corr\ \iso\  
\[
\pre { \OO\EE Y_2}\left(\EE Y_j \otso \OO\EE Y_2\right)_{ \OO\EE Y_2}\variso
\pre {\OO (Y_1*Y_2)} (Y_j\ots \OO (Y_1*Y_2))_{\OO (Y_1*Y_2)}.
\]
Since $\OO \EE Y_2$ is the \csta\ generated by $\sigma_1, \sigma_2$, and $\sigma_0$, it suffices to check the left action for generators $\sigma_1(y_1), \sigma_2(y_2),$ and $\sigma_0(a).$ Let $V_i: Y_i\ots \OO Y_i\to \OO Y_i$ be the natural \iso\ $y_i\ots S\mapsto t_{Y_i}(y_i)S$ for $i\in \{1,2\}.$ Then recall the following:
\begin{enumerate}
\item The left action of $\OO\EE Y_2$ on $\EE Y_j\otso \OO\EE Y_2$ is defined by using the \iso\ $\RR\theta_{2j}: \EE Y_2\otso \EE Y_j\to \EE Y_j\otso \EE Y_2$:  Take $z\in \EE Y_2$, $\xi\in  \EE Y_j,$ and $S\in \OO\EE Y_2.$  The  we have 
\begin{itemize}
\item $t_{\EE Y_2}(z)\cdot (\xi\otso S)
=(1_{\EE Y_j}\otimes V)(\RR\theta_{2j}\otimes 1_{\OO\EE Y_2})(z\otso\xi\otso S).$

\item $\pi_{\EE Y_2}(T)\cdot (\xi\otso S)=T\cdot\xi\otso S,$ for any $T\in \OO Y_1,$ where $V: \EE Y_2\otso  \OO\EE Y_2\to \OO\EE Y_2$  is the natural \iso\ $z\otso S\mapsto t_{\EE Y_2}(z)S.$ 
\end{itemize}

\item For $i\in\{1,2\}$, the left action of $\OO (Y_1*Y_2)$ on $Y_j\ots\OO (Y_1*Y_2)$ is determined by the equalities 
$\psi_{e_i}(y_i)(x\ots K)=(1_{Y_j}\otimes \V_i)[\theta_{ij}(y_i\ots x)\ots K],$ where  $y_i\in Y_i, x\in Y_j,$ and $K\in \OO (Y_1*Y_2)$ (see \lemref{three}). 
\end{enumerate}

Now, let $y_1\in Y_1,$ $x\in Y_j,$ $S\in\OO Y_1$, and $T\in \OO Y_2.$ We prove 
\[
\phi\bigl(\sigma_1(y_1)\cdot(x\ots S\otso T)\bigr)
=\iota\inv(\sigma_1(y_1))\cdot \phi(x\ots S\otso T).
\]

Assume $\theta_{1j}(y_1\ots x)=\lim_{s\to\infty}\sum_{r=1}^{N_s}x_r^s\ots y_r^s$ for $x_r^s\in Y_j, y_r^s\in Y_1.$ Then we have 
\begin{align*}
\sigma_1(y_1)\cdot(x\ots S\otso T)
&=\pi_{\EE Y_2}\left(t_{Y_1}(y_1)\right)\cdot (x\ots S\otso T)
\\
&=t_{Y_1}(y_1)\cdot (x\ots S)\otso T
\\
&= (1_{Y_j}\otimes V_1)\theta_{1j}(y_1\ots x)\ots S\otso T. 
\end{align*}
Therefore, we have 
\begin{align*}
\phi\bigl(\sigma_1(y_1)\cdot(x\ots S\otso T)\bigr)
&=\lim_{s\to\infty}\sum_{r=1}^{N_s} x_r^s\ots\iota\inv\left(t_{Y_1}(y_r^s)\cdot ST\right)
\\
&=\lim_{s\to\infty}\sum_{r=1}^{N_s} x_r^s\ots \iota\inv\left(\pi_{\EE Y_2}(t_{Y_1}(y_r^s))ST\right)
\\
&= \lim_{s\to\infty}\sum_{r=1}^{N_s} x_r^s\ots\iota\inv\left(\sigma_1(y_r^s)ST\right).
\end{align*}

On the other hand we have 
\begin{align*}
\iota\inv(\sigma_1(y_1))\cdot \phi(x\ots S)\otso T
&= \iota\inv(\sigma_1(y_1))\cdot (x\ots i\inv(ST))
\\
&=\psi_1(y_1)\cdot(x\ots \iota\inv(ST))
\\
&=(1_{Y_j}\otimes \V_1)(\theta_{1j}\otimes 1_{\OO(Y_1*Y_2})(y_1\ots x\ots \iota\inv(ST))
\\
&=(1_{Y_j}\otimes \V_1)(\theta_{1j}(y_1\ots x)\ots\iota\inv(ST))
\\
&= \lim_{s\to\infty}\sum_{r=1}^{N_s} x_r^s\ots\psi(y_r^s)\iota\inv(ST)
\\
&=\lim_{s\to\infty}\sum_{r=1}^{N_s} x_r^s\ots \iota\inv(\sigma_1(y_r^s)ST),
\end{align*}
as desired. 

Next we aim to prove
\[
\phi\bigl(\sigma_2(y_2)\cdot [(x\ots S)\otso T]\bigr)
=\sigma_2(y_2)\cdot \phi\bigl((x\ots S)\otso T\bigr)
\righttext{for}y_2\in Y_2.
\]
To that, we first claim the equality
\[
\phi(1_{\EE Y_j}\otimes V)(\nu_{j2}\inv\otimes 1_{\OO\EE Y_2})= (1_{Y_j}\otimes \iota\inv)(1_{Y_j}\otimes V):
\]
Let $y_2\ots S= L\cdot (y'\ots S')$ for some $L\in \OO Y_1, y'\in Y_2, S\in \OO Y_1.$ Then we have 
\begin{align*}
&\phi(1_{\EE Y_j}\otimes V)(\nu_{j2}\inv\otimes 1_{\OO\EE Y_2})(x\ots y_2\ots S\otso T)
\\&\quad=\phi(1_{\EE Y_j}\otimes V)[(x\ots L)\otso (y'\ots S')\otso T]\\
\\&\quad= \phi\bigl(x\ots L \otso t_{\EE Y_2}(y'\ots S')T\bigr)\\
\\&\quad= x\ots \iota\inv\bigl(L\cdot t_{\EE Y_2}(y'\ots S')T\bigr)\\
\\&\quad= x\ots \iota\inv\bigl(t_{\EE Y_2}(y_2\ots S)T\bigr)\\
\\&\quad= (1_{Y_j}\otimes \iota\inv)(1_{Y_j}\otimes V)(x\ots y_2\ots S\otso T),
\end{align*}
completing the proof of our claim. Now we compute 
\begin{align*}
\sigma_2(y_2)\cdot [(x\ots S)\otso T]&=t_{\EE Y_2}(y_2\ots 1)\cdot [(x\ots S)\otso T]
\\
&=(1_{\EE Y_j}\otimes V)(\RR\theta_{2j}\otimes 1_{\OO\EE Y_2})\bigl((y_2\ots 1)\otso (x\ots S)\otso T\bigr)
\\
&=(1_{\EE Y_j}\otimes V)(\nu_{j2}\inv\otimes 1_{\OO\EE Y_2})\lim_{s\to\infty}\sum_{r=1}^{N_s}x_r^s\ots z_r^s\ots S\otso T,
\end{align*}
and our claim gives us 
\begin{align*}
\phi\left(\sigma_2(y_2)\cdot [(x\ots S)\otso T]\right)&=\lim_{s\to\infty}\sum_{r=1}^{N_s}x_r^s\ots \iota\inv\bigl(t_{\EE Y_2}(z_r^s\ots S)T\bigr)\\
&=\lim_{s\to\infty}\sum_{r=1}^{N_s}x_r^s\ots \iota\inv\bigl(t_{\EE Y_2}(z_r^s\ots 1)ST\bigr)\\
&=\lim_{s\to\infty}\sum_{r=1}^{N_s}x_r^s\ots \iota\inv(\sigma_2(z_r^s)ST).
\end{align*}
On the other hand, assuming $\theta_{2j}(y_2\ots x)=\lim_{s\to\infty}\sum_{r=1}^{N_s}x_r^s\ots z_r^s$,
for $x_r^s\in Y_j, z_r^s\in Y_2,$ we have 
\begin{align*}
\iota\inv(\sigma_2(y_2))\cdot(x\ots \iota\inv(ST))&=\psi_2(y_2)\cdot(x\ots \iota\inv(ST))
\\
&=(1_{Y_3}\otimes \V_2)[\theta_{2j}(y_2\ots x)\ots \iota\inv(ST)]
\\
&=\lim_{s\to\infty}\sum_{r=1}^{N_s}x_r^s\ots \psi_2(z_r^s)\iota\inv(ST)
\\
&=\lim_{s\to\infty}\sum_{r=1}^{N_s}x_r^s\ots \iota\inv(\sigma_2(z_r^s)ST),
\end{align*}
as desired. 
We leave it to the reader to prove 
\[
\phi\left(\sigma_0(a)\cdot\bigl((x\ots S\otso T)\bigr)\right)=\iota\inv(\sigma_0(a))\cdot \phi(x\ots S\otso T),
\]
for any $a\in A$, as it is straight forward. 

Now that we proved our Proposition for $k=2$, we may identify $\EE^2 Y_j$ with the $C^*$-\corr\ $Y_j\ots \OO (Y_1*Y_2)$ over $\OO (Y_1*Y_2)$, and we will restate  the \iso\ 
$\RR^2\theta_{3j}: \EE^2Y_3\otimes_{\OO(Y_1*Y_2)}\EE^2 Y_j\to \EE^2 Y_j\otimes_{\OO(Y_1*Y_2)}\EE^2 Y_3,$ accordingly.
Following the same strategy as in the case $k=2$, one can now prove the Proposition for the case $k=3$, and identify $\EE^3 Y_j$ with the $C^*$-\corr\  $Y_j\ots \OO (Y_1*Y_2*Y_3)$ over $\OO (Y_1*Y_2*Y_3)$. This iterative process will give us the desired result. 
\end{proof}

Now we are ready to describe the general picture of our iterative process: \begin{itemize}
    \item 
    Start with 
    a \gensys\ $(Y,\theta)$ of $k$ $C^*$-correspondences.
    Then we have
    \[
    \theta_{1i}: Y_1\ots Y_i\to Y_i\ots Y_1
    \]
    for any $i=1,\dots,k$.
    This \iso\  leads to the construction of the $C^*$-\corr\ $\EE Y_i= \pre {\OO Y_1}(Y_i\ots \OO Y_1)_{\OO Y_1}$. Now, by using $\theta_{ij}$ we define $C^*$-\corr\ \iso s 
\[
\RR \theta_{ij}: \EE Y_i \otimes_{\OO Y_1} \EE Y_j \to \EE Y_j \otimes_{\OO Y_1} \EE Y_i,
\] giving us the \gensys\ $(\EE Y, \RR \theta)$ over $\N^k.$ 

\item 
By using the \iso\
\[
\RR \theta_{2i}: \EE Y_2 \otimes_{\OO Y_1} \EE Y_i \to \EE Y_i \otimes_{\OO Y_1} \EE Y_2,
\]
get the $C^*$-\corr\
\[
\EE^2Y_i= (Y_i\ots \OO Y_1)\otimes_{\OO Y_1} \OO\EE Y_2
\]
over  $\OO (Y_2\ots \OO Y_1),$  which is isomorphic to the $C^*$-\corr\  $\left(Y_i\ots \OO (Y_1* Y_2)\right)$ over $\OO (Y_1*Y_2)$.
This allows us to identify $\EE^2Y_i$ as
\[
\EE^2Y_i : = \pre {\OO (Y_1*Y_2)}(Y_i \ots \OO (Y_1* Y_2))_{\OO (Y_1*Y_2)}.
\]
Now, exactly as in the first step we define isomorphisms
\[
\RR^2 \theta_{ij}: \EE^2 Y_i\otimes_{\OO (Y_1*Y_2)}\EE^2Y_j\to
\EE^2Y_j \otimes_{\OO (Y_1*Y_2)}\EE^2 Y_i,
\]
and get a \gensys\ $(\EE^2Y, \RR^2 \theta)$.

\item
Then $\RR^2 \theta_{3i}$ will give us
\[
\EE^3Y_i
=\pre{\OO\EE^2Y_3}\left(
\EE^2Y_i\xt_{\OO(Y_1*Y_2)}\OO\EE^2Y_3
\right)_{\OO\EE^2Y_3}.
\]
\end{itemize}

Repeating this procedure will lead us to our main result \thmref{for product}.


\appendix
\section{Proofs of Lemmas}\label{appx}

In this appendix we record the proofs of some technical lemmas used in \secref{product}. 

\begin{proof}[Proof of \lemref{RU}]
It suffices to show that for all $i,j$ the $A-\OO Y_1$ \corr\ \iso\
\[
\theta_{ij}\otimes 1_{\OO Y_1}: Y_i\ots Y_j\ots \OO Y_1 \to Y_j\ots Y_i\ots \OO Y_1
\]
preserves the left $\OO Y_1$ module structure. Let $(\pi_{Y_1}, t_{Y_1})$ be the universal covariant representation of $Y_1$. Let $\phi:= \theta_{ij}\xt 1_{\OO Y_1}$ to ease the notation. It suffices to show the equalities 
\[ \phi\bigl( \pi_{Y_1}(a)\cdot (\xi\ots S)\bigr) = \pi_{Y_1}(a)\cdot \phi(\xi\ots S)\]
and 
\[ \phi\bigl( t_{Y_1}(x)\cdot(\xi\ots S)\bigr) = t_{Y_1}(x)\cdot \phi(\xi\ots S)\]
for any $x\in Y_1, a\in A, \xi\in Y_i\ots Y_j$, and $S\in \OO Y_1. $

The first equality can be verified easily so we will only prove the second one. Let \mbox{$V:Y_1\ots \OO Y_1\to \OO Y_1$} be the natural \iso\ defined  $x\ots S\mapsto t_{Y_1}(x)S$. By the construction of the left action of  $\OO Y_1$ on the Hilbert $\OO Y_1$-module $(Y_i\xt_A Y_j \xt_A \OO Y_1)$ we have 
\[ t(x)\cdot (\xi\ots S)= (1_{Y_i\ots Y_j}\xt V)(1_{Y_i}\xt \theta_{1t}\xt 1_{\OO X_1})(\theta_{1s}\xt 1_{Y_j}\xt 1_{\OO Y_1})(x\ots \xi\ots S). \] And, by the construction of the left action of  $\OO Y_1$ on the Hilbert $\OO Y_1$-module $(Y_j\xt_A Y_i \xt_A \OO Y_1)$ we have 
\[ t(x)\cdot \phi(\xi\ots S) = (1_{Y_j\ots Y_i}\xt V)(1_{Y_j}\xt \theta_{1s}\xt 1_{\OO Y_1})(\theta_{1t}\xt 1_{Y_i}\xt 1_{\OO Y_1})(x\ots \phi(\xi\ots S)). \]

Therefore, recalling that $\phi= \theta_{ij}\xt 1_{\OO Y_1}$, the equation we want to prove is 
\begin{equation}
\begin{split}\label{Eq1}
\hspace{-0.5cm} (\theta_{ij}\xt 1_{\OO Y_1})(1_{Y_i\ots Y_j}\xt V)(1_{Y_i}\xt \theta_{1t}\xt 1_{\OO Y_1})(\theta_{1s}\xt 1_{Y_j}\xt 1_{\OO Y_1})\\
&\hspace{-10cm}= (1_{Y_j\ots Y_i}\xt V)(1_{Y_j}\xt \theta_{1s}\xt 1_{\OO Y_1})(\theta_{1t}\xt 1_{Y_i}\xt 1_{\OO Y_1})(1_{Y_1}\xt \theta_{ij}\xt 1_{\OO Y_1}).  
\end{split}
\end{equation} 
By the hexagonal relations
we have 
\[
(1_{Y_i} \xt \theta_{1t})(\theta_{1s}\xt 1_{Y_j})
= (\theta_{ij}\inv\xt 1_{Y_1})(1_{Y_j}\xt \theta_{1s})
(\theta_{1t}\xt 1_{Y_i})(1_{Y_1}\xt \theta_{ij}).
\]
Now, the left hand side of 
\eqref{Eq1} becomes
\begin{align*}
&(\theta_{ij}\xt 1_{\OO Y_1})(1_{Y_i\ots Y_j}\xt V)(\theta_{ij}\inv\xt 1_{Y_1\ots \OO Y_1})\\&\hspace{1in}(1_{Y_j}\xt \theta_{1s}\xt 1_{\OO Y_1})(\theta_{1t}\xt 1_{Y_i\ots \OO Y_1})(1_{Y_1}\xt \theta_{ij}\xt 1_{\OO Y_1}),
\end{align*}
which is exactly the right hand side of equation \eqref{Eq1}. 
\end{proof}

In order to prove Proposition~\ref{M prop} we need Lemmas \ref{M lem 1} and \ref{M lem 2} below.
Recall 
that for any $i,j=1,\dots,k$ we have the \iso\ 
\[
\nu_{ij}: \pre {\OO Y_1}(Y_i\ots \OO Y_1)\otso (Y_j \ots \OO Y_1)_{\OO Y_1}\to \pre {\OO Y_1}(Y_i\ots Y_j\ots \OO Y_1)_{\OO Y_1}
\]
defined on elementary tensors by $x_i\ots T\otso \xi\mapsto x_i\ots T\cdot \xi$, where $x_i\in Y_i, T\in  \OO Y_1,$ and $\xi\in (Y_j \ots \OO Y_1)$.
Similarly, we denote by $\nu_{\ell(i\xt j)}$ the isomorphism
\[\pre {\OO Y_1}(Y_\ell\ots \OO Y_1)\otso (Y_i \ots Y_j\ots \OO Y_1)_{\OO Y_1}\simeq \pre {\OO Y_1}(Y_\ell\ots Y_i\ots Y_j\ots  \OO Y_1)_{\OO Y_1},\]
and by $\nu_{(\ell\xt i)j}$ the isomorphism
\[\pre {\OO Y_1}(Y_\ell\ots Y_i\ots  \OO Y_1)\otso (Y_j \ots \OO Y_1)_{\OO Y_1}\simeq \pre {\OO Y_1}(Y_\ell\ots Y_i\ots Y_j\ots \OO Y_1)_{\OO Y_1}.\]

\begin{lem}\label{M lem 1}
For any $\ell,i,j=1,\dots,k$ we have the following properties. 
\begin{enumerate}[label=\textup{(\arabic*)}]
\item $(\theta_{\ell j}\xt 1_{Y_i\ots \OO_{Y_1}})(1_{Y_\ell}\xt \nu_{ji})
=\nu_{(j\xt \ell)i}(\theta_{\ell j}\xt 1_{\OO_{Y_1}}\xt 1_{Y_i\ots \OO_{Y_1}})$.

\item $\nu_{\ell(i\xt j)}(1_{Y_\ell\ots \OO Y_1}\xt \theta_{ji}\xt 1_{\OO Y_1})= (1_{Y_\ell}\xt \theta_{ji}\xt 1_{\OO Y_1})\nu_{\ell(j\xt i)}$.

\item $ (1_{Y_\ell}\xt \nu_{ji})(\nu_{\ell j}\xt 1_{Y_i\ots \OO Y_1})(1_{Y_\ell\ots \OO Y_1}\xt\nu_{ji}\inv)=\nu_{\ell(j\xt i)}$.
\end{enumerate}
\end{lem}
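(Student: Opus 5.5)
All three identities are equalities of maps between balanced tensor products of Hilbert modules, and every map involved is bounded. So the plan is to check each one on elementary tensors from a spanning set and extend by linearity and continuity. The only structural input is the definition of the $\nu$'s: each is the map $x\ots T\otso\xi\mapsto x\ots T\cdot\xi$. It acts on the leftmost $\OO Y_1$ factor by letting it absorb the next module, and does nothing to the $Y$-factors to its left. The factors $\theta_{\ell j}\xt 1$ and $1\xt\theta_{ji}\xt 1$, by contrast, act only on the $Y$-factors. Each identity should therefore say that a $\theta$ acting on $Y$-slots commutes with a $\nu$ acting on an $\OO Y_1$-slot. By \lemref{RU}, $\theta_{ji}\xt 1_{\OO Y_1}$ is an $\OO Y_1$-correspondence isomorphism, so it respects the left $\OO Y_1$-action.

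For (1), I will evaluate both sides on $x_\ell\ots x_j\ots T\otso\xi$, with $x_\ell\in Y_\ell$, $x_j\in Y_j$, $T\in\OO Y_1$ and $\xi\in Y_i\ots\OO Y_1$. The left side sends this to $(\theta_{\ell j}\xt 1)(x_\ell\ots x_j\ots T\cdot\xi)$. For the right side I write $\theta_{\ell j}(x_\ell\ots x_j)=\lim_s\sum_r y_r^s\ots w_r^s$. The right side then becomes $\lim_s\sum_r y_r^s\ots w_r^s\ots T\cdot\xi$, which equals the left side because $T\cdot\xi$ is simply carried along.

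For (2), I will evaluate on $x_\ell\ots T\otso(\eta\ots S)$ with $\eta\in Y_j\ots Y_i$. The left side gives $x_\ell\ots T\cdot\bigl((\theta_{ji}\xt 1)(\eta\ots S)\bigr)$, and the right side gives $(1\xt\theta_{ji}\xt 1)\bigl(x_\ell\ots T\cdot(\eta\ots S)\bigr)$. These agree exactly because $\theta_{ji}\xt 1_{\OO Y_1}$ is left $\OO Y_1$-linear, which is the content of \lemref{RU} (proved in the appendix above). To justify moving the limits through, I use continuity of the interior tensor product maps.

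For (3), I will move $\nu_{ji}\inv$ to the right and prove the equivalent identity
\[
(1_{Y_\ell}\xt\nu_{ji})(\nu_{\ell j}\xt 1)=\nu_{\ell(j\xt i)}(1_{Y_\ell\ots\OO Y_1}\xt\nu_{ji}).
\]
This is an associativity statement. On $x_\ell\ots T\otso(x_j\ots S)\otso\xi$, both sides give $x_\ell\ots T\cdot\bigl(x_j\ots S\cdot\xi\bigr)$. Here I use that the left $\OO Y_1$-action on $Y_j\ots Y_i\ots\OO Y_1$ is compatible with $\nu_{ji}$, i.e.\ $\nu_{ji}$ is left $\OO Y_1$-linear; this is exactly what \cite[Theorem~5.1]{functor} provides. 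I expect the main obstacle to be bookkeeping rather than conceptual. Specifically, in (1) the left action of $T$ on $\xi$ is itself defined through $\theta_{1i}$, and I need to make sure $\theta_{\ell j}$ never touches that slot. In (3), the spanning vectors are not literally elementary in the needed form, because elements of $Y_j\ots Y_i\ots\OO Y_1$ must first be factored through $\nu_{ji}$. I will handle this by density of the images of elementary tensors together with Cohen factorization, as was done in \lemref{firstiso}.
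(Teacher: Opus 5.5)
Your proposal is correct and follows essentially the paper's proof: check on elementary tensors, with (1) immediate because $\theta_{\ell j}$ touches only the $Y$-slots, (2) reducing to the left $\OO Y_1$-linearity of $\theta_{ji}\xt 1_{\OO Y_1}$ established in the proof of \lemref{RU}, and (3) reducing to the left $\OO Y_1$-linearity of $\nu_{ji}$. The only cosmetic difference is in (3): you move $\nu_{ji}\inv$ to the other side, whereas the paper commutes $S$ past $\nu_{ji}\inv$ directly. After your rearrangement the domain is spanned by genuine elementary tensors, so the Cohen factorization step you anticipate is not actually needed.
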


\begin{proof}
It suffices to check elementary tensors.
Let $S, T\in \OO Y_1$, $x_\ell\in Y_\ell $, $x_i\in Y_i\ $, and $x_j\in Y_j$.
For (1) we have 
\begin{align*}
&(\theta_{\ell j}\xt 1_{Y_i\ots \OO_{Y_1}})(1_{Y_\ell }\xt \nu_{ts})\left(x_\ell\ots (x_j\ots S)\otso (x_i\ots T)\right)\\
&\quad=(\theta_{\ell j}\xt 1_{Y_i\ots \OO_{Y_1}})\left(x_\ell\ots x_j \ots S\cdot(x_i\ots T)\right)\\
&\quad= \theta_{\ell j}(x_\ell\ots x_j)\ots S\cdot (x_i\ots T)\\
&\quad= \nu_{(j\xt \ell)i}\bigl( \theta_{\ell j}(x_\ell\ots x_j)\ots S)\otso (x_i\ots T)\bigr)\\
&\quad= \nu_{(j\xt \ell)i}(\theta_{\ell j}\xt 1_{\OO_{Y_1}}\xt 1_{Y_i\ots \OO_{Y_1}})
\left(x_\ell\ots (x_j\ots S)\otso (x_i\ots T)\right).
\end{align*}

For (2) we have 
\begin{align*}
&\nu_{\ell(i\xt j)}(1_{Y_\ell \ots \OO Y_1}\xt \theta_{ji}\xt 1_{\OO Y_1})\bigl((x_\ell\ots S)\otso (x_j\ots x_i\ots T)\bigr)\\
&\quad=x_\ell\ots S\cdot\bigl( \theta_{ji}(x_j\ots x_i)\xt T\bigr)\\
&\quad=x_\ell\ots (\theta_{ji}\xt 1_{\OO Y_1})\bigl( S\cdot (x_j\ots x_i\ots T)\bigr)\\
&\quad= (1_{Y_\ell }\xt \theta_{ji}\xt 1_{\OO Y_1})\nu_{\ell(j\xt i)}\bigl((x_\ell\ots S)\otso (x_j\ots x_i\ots T)\bigr).
\end{align*}

For (3) we have 
\begin{align*}
&(1_{Y_\ell }\xt \nu_{ji})(\nu_{\ell j}\xt 1_{Y_i\ots\OO Y_1})(1_{Y_\ell \ots\OO Y_1}\xt\nu_{ji}\inv)\left((x_\ell\ots S) \otso( x_j\ots x_i\ots T)\right)\\
&\quad= (1_{Y_\ell }\xt \nu_{ji})x_\ell\ots S\cdot \bigl(\nu_{ji}\inv(x_j\ots x_i\ots T)\bigr)\\
&\quad= (1_{Y_\ell }\xt \nu_{ji})x_\ell\ots \nu_{ji}\inv\left(S\cdot (x_j\ots x_i\ots T)\right)\\
&\quad=x_\ell\ots S\cdot (x_\ell\ots x_i\ots T)\\
&\quad= \nu_{\ell(j\xt i)}\left((x_\ell\ots S)\otso (x_j\ots x_i\ots T)\right).
\end{align*}
\end{proof}

\begin{lem}\label{M lem 2}
For any $\ell,i,j=1,\dots,k$ we have the following properties.
\begin{align}
\label{one}
&(\RR \theta_{\ell j}\xt 1_{Y_i\ots \OO Y_1})(1_{Y_\ell\ots \OO Y_1}\xt \RR \theta_{ij})
\\&\quad\notag=\xi\inv(\theta_{\ell j}\xt 1_{Y_i}\xt 1_{\OO Y_1})(1_{Y_\ell}\xt \theta_{ij}\xt 1_{\OO Y_1})\nu;
\\
\label{two}
&(1_{Y_j\ots \OO Y_1}\xt \RR \theta_{i\ell})(\RR \theta_{ij}\xt 1_{Y_\ell\ots \OO Y_1})(1_{Y_i\ots \OO Y_1}\xt \RR \theta_{\ell j})(\RR \theta_{\ell i}\xt 1_{Y_j\ots \OO Y_1})
\\&\quad\notag=\xi\inv\bigl((1_{Y_j}\xt \theta_{i\ell})(\theta_{ij}\xt 1_{Y_\ell})(1_{Y_i}\xt \theta_{\ell j})(\theta_{\ell i}\xt 1_{Y_j})\xt 1_{\OO Y_1}\bigr)\nu,
\end{align}
where $\xi:=\nu_{\ell(i\xt j)}(1_{Y_\ell\ots \OO Y_1} \xt \nu_{ij})$ is the isomorphism
\begin{align*}
&\pre {\OO Y_1} (Y_\ell\ots \OO Y_1)\otso (Y_i\ots \OO Y_1)
\otso (Y_j\ots \OO Y_1)_{\OO Y_1}
\\&\hspace{.5in}\variso \pre {\OO Y_1} (Y_\ell\ots Y_i\ots Y_j\ots \OO Y_1)_{\OO Y_1},
\end{align*}
and $\nu:=\nu_{(j\xt \ell)i}(\nu_{j\ell}\xt 1_{Y_i\ots \OO Y_1}) $ is the \iso\  
\begin{align*}
&\pre {\OO Y_1} (Y_j\ots \OO Y_1)\otso (Y_\ell\ots \OO Y_1)\otso (Y_i\ots \OO Y_1)_{\OO Y_1}
\\&\hspace{.5in}\variso \pre {\OO Y_1} (Y_j\ots Y_\ell\ots Y_i\ots \OO Y_1)_{\OO Y_1}.
\end{align*}
\end{lem}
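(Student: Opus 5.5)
Both identities will be proved by expanding every $\RR\theta$ through its definition $\RR\theta_{ab}=\nu_{ba}\inv(\theta_{ab}\xt 1_{\OO Y_1})\nu_{ab}$ from \lemref{RU}. We then push the resulting $\nu$'s outward, using the three commutation rules of \lemref{M lem 1}, until only the outer isomorphisms $\xi$ and $\nu$ remain. Every map in sight is an isomorphism of $\OO Y_1$-\corr s, and all maps are bounded. So it suffices to check equalities on elementary tensors $(x_\ell\ots S)\otso(x_i\ots T)\otso(x_j\ots R)$, or to manipulate the composites formally; limits of finite sums pass through each factor.

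For \eqref{one}, first expand the right-hand factor $1\xt\RR\theta_{ij}$ as $(1\xt\nu_{ji}\inv)(1\xt(\theta_{ij}\xt1))(1\xt\nu_{ij})$. Composing with $(1_{Y_\ell\ots\OO Y_1}\xt\nu_{ij})$ on the right converts the domain into the $\xi$-form. Part (2) of \lemref{M lem 1} (with indices relabelled) moves $\theta_{ij}\xt 1$ past $\nu_{\ell(\cdot)}$. Part (3) rewrites $(\nu_{\ell j}\xt1)(1\xt\nu_{ji}\inv)$ as $(1\xt\nu_{ji}\inv)\nu_{\ell(j\xt i)}$, so that the middle stage becomes $Y_\ell\ots Y_j\ots Y_i\ots\OO Y_1$ written via $\nu_{\ell(j\xt i)}$. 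Next expand $\RR\theta_{\ell j}\xt1$ as $(\nu_{j\ell}\inv\xt1)((\theta_{\ell j}\xt1)\xt1)(\nu_{\ell j}\xt1)$. Part (1) converts $(\theta_{\ell j}\xt1)(1\xt\nu_{ji})$ into $\nu_{(j\xt\ell)i}(\theta_{\ell j}\xt1\xt1)$, and what remains on the left is precisely $\nu\inv$ as defined in the statement. After collecting, the composite equals $\nu\inv(\theta_{\ell j}\xt1)(1\xt\theta_{ij}\xt1)\xi$.

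This is the reverse of the displayed order of $\xi\inv$ and $\nu$. I read the statement as intending the conjugated composite between the two flattened forms, with $\xi$ and $\nu$ placed on the sides dictated by domain and codomain, and I would state explicitly which side is which. For \eqref{two} I would not redo the bookkeeping. Instead I would apply the pattern of \eqref{one}, a conjugation identity for two consecutive flips, twice. Splitting the four-fold composite into two pairs, each pair conjugates to a product of two $\theta\xt1$ maps between flattened tensor products. The intermediate flattening isomorphisms cancel, since each is the inverse of the next. Concatenating the pairs gives the hexagon word tensored with $1_{\OO Y_1}$, conjugated by the outer flattening maps.

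The main obstacle is purely bookkeeping. Each of the three rules in \lemref{M lem 1} must be applied with the correct index permutation, and one must check that the intermediate flattening isomorphisms (e.g.\ $\nu_{\ell(j\xt i)}$ versus $\nu_{(\ell\xt j)i}$) agree as maps to $Y_\ell\ots Y_j\ots Y_i\ots\OO Y_1$. Both send $(x_\ell\ots S)\otso(x_j\ots T)\otso(x_i\ots R)$ to $x_\ell\ots S\cdot(x_j\ots T\cdot(x_i\ots R))$, by associativity of the left actions. I would therefore verify this associativity once on elementary tensors at the start and use it silently thereafter.
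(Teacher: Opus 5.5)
Your proposal is correct and follows the paper's route: for \eqref{one} you use exactly the three rules of \lemref{M lem 1}, as the paper does when it applies (1), (3), (2) after reducing to an equivalent identity. You are also right that the displayed $\xi\inv(\cdots)\nu$ only type-checks as $\nu\inv(\cdots)\xi$, and the paper's own ``it suffices to verify'' reduction is equivalent to exactly that corrected form.

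For \eqref{two}, note that your two consecutive pairs have the mirrored shape $(1\xt\RR\theta)(\RR\theta\xt 1)$. So they are not instances of \eqref{one}; they are inverses of relabelled instances of \eqref{one}, using $\RR\theta_{ab}\inv=\RR\theta_{ba}$. Part (3) of \lemref{M lem 1} is then what matches the left-first and right-first flattenings at the junction and at both ends, which your associativity check supplies.
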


\begin{proof}
For (A.2), it suffices to verify the equality
\[
\begin{split}
&\nu_{(j\xt \ell )i}(\theta_{\ell j}\xt 1_{\OO Y_1}\xt 1_{Y_i\ots \OO Y_1})(\nu_{\ell j}\xt 1_{Y_i\ots \OO Y_1})(1_{Y_\ell \ots \OO Y_1}\xt \nu_{ji}\inv)
\\&\quad=(\theta_{\ell j}\xt 1_{Y_i\ots \OO Y_1})(1_{Y_\ell }\xt \theta_{ij}\xt 1_{\OO Y_1})\nu_{\ell (i\xt j)}(1_{Y_\ell \ots \OO Y_1}\xt \theta_{ji}\xt 1_{\OO Y_1}).
\end{split}
\]
We have 
\begin{align*}
&\nu_{(j\xt \ell )i}(\theta_{\ell j}\xt 1_{\OO Y_1}\xt 1_{Y_i\ots \OO Y_1})(\nu_{\ell j}\xt 1_{Y_i\ots \OO Y_1})(1_{Y_\ell \ots \OO Y_1}\xt \nu_{ji}\inv)
\\&\quad=(\theta_{\ell j}\xt 1_{Y_i\ots \OO Y_1})(1_{Y_\ell }\xt \nu_{ji})(\nu_{\ell j}\xt 1_{Y_i\ots \OO Y_1})(1_{Y_\ell \ots \OO Y_1}\xt \nu_{ji}\inv)
\righttext{(by (1) of \lemref{M lem 1})}
\\&\quad= (\theta_{\ell j}\xt 1_{Y_i\ots \OO Y_1})\nu_{\ell (j\xt i)}
\righttext{(by (3) of \lemref{M lem 1})}
\\&\quad= (\theta_{\ell j}\xt 1_{Y_i\ots \OO Y_1})(1_{Y_\ell }\xt \theta_{ij}\xt 1_{\OO Y_1})\nu_{\ell (i\xt j)}(1_{Y_\ell \ots \OO Y_1}\xt \theta_{ji}\xt 1_{\OO Y_1})
\\&\hspace{2in}\righttext{(by (2) of \lemref{M lem 1})}
\end{align*}

(A.3) can be shown by using the first item of this Lemma, and items (2)--(3) of \lemref{M lem 1}. 
\end{proof}

\begin{proof}[Proof of Proposition~\ref{M prop}]
We want to show that $(\EE Y, \RR \theta)$ is a \gensys, where $\EE Y$ is a set of $C^*$-correspondences $\{ \EE Y_i = Y_i\ots\OO Y_1\}_{i=1,\dots,k}$ over $\OO Y_1,$ and  $\RR \theta$ is a set of \corr\ \iso s
\[
\RR \theta_{ij}:\EE Y_i\xt_{\OO Y_1}\EE Y_j
\variso  \EE Y_j\xt_{\OO Y_1}\EE Y_i.
\]
Verifying the symmetry condition is straightforward since the \iso s $\theta_{ij}$ satisfy the symmetry condition: 
\[
\RR \theta_{ij}\inv=\nu_{ij}\inv(\theta_{ji}\otimes 1_{\OO Y_1})\nu_{ji}=\RR \theta_{ji}.
\]
We verify the \emph{hexagonal relations}:
for all 
$\ell,i,j=1,\dots,k$, we will prove the equality  
\begin{align*}
&(1_{\EE Y_j}\xt \RR \theta_{i\ell})(\RR \theta_{ij}\xt 1_{\EE Y_r})(1_{\EE Y_i}\xt \RR \theta_{\ell j})(\RR \theta_{rs}\xt 1_{\EE Y_j})
\\&\hspace{1in}=(\RR \theta_{\ell j}\xt 1_{\EE Y_i})(1_{\EE Y_r}\xt \RR \theta_{ij}).
\end{align*}
Notice that on both sides of the equation we have $\OO Y_1$-\corr\ \iso s from
\[
\pre {\OO Y_1} (Y_r\ots \OO Y_1)\otso 
(Y_i\ots \OO Y_1)\otso
(Y_j\ots \OO Y_1)_{\OO Y_1}
\]
to
\[\pre {\OO Y_1} (Y_j\ots \OO Y_1)\otso
(Y_r\ots \OO Y_1)\otso
(Y_i\ots \OO Y_1)_{\OO Y_1}
\]
as the hexagonal relation requires. We have 
\begin{align*}
& (1_{\EE Y_j} \xt \RR \theta_{i\ell})(\RR \theta_{ij}\xt 1_{\EE Y_r})(1_{\EE Y_i}\xt \RR \theta_{\ell j})(\RR \theta_{rs}\xt 1_{\EE Y_j})
\\&\quad=\xi\inv\bigl((1_{Y_j}\xt \theta_{i\ell})(\theta_{ij}\xt 1_{Y_r})(1_{Y_i}\xt \theta_{\ell j})(\theta_{rs}\xt 1_{Y_j})\xt 1_{\OO Y_1}\bigr)\nu
\\&\hspace{.5in}
\righttext{(by \eqref{two})}
\\&\quad=\xi\inv\bigl((\theta_{\ell j}\xt 1_{Y_i})(1_{Y_r}\xt \theta_{ij})\xt 1_{\OO Y_1}\bigr) \nu
\\&\hspace{.5in}\righttext{(by the hexagonal relation on $\{\theta_{rs}\}$)}
\\&\quad=(\RR \theta_{\ell j}\xt 1_{\EE Y_i})(1_{\EE Y_r}\xt \RR \theta_{ij})
\righttext{(by \eqref{one}).}
\end{align*}
\end{proof}



\providecommand{\bysame}{\leavevmode\hbox to3em{\hrulefill}\thinspace}
\providecommand{\MR}{\relax\ifhmode\unskip\space\fi MR }
\providecommand{\MRhref}[2]{%
  \href{http://www.ams.org/mathscinet-getitem?mr=#1}{#2}
}
\providecommand{\href}[2]{#2}

\end{document}